\definecolor{allrefcolors}{rgb}{0,0.5,0.4}
\newtheorem*{theorem*}{Theorem}
\newtheorem{theorem}{Theorem}[section]
\newtheorem{lemma}[theorem]{Lemma}
\newtheorem{proposition}[theorem]{Proposition}
\theoremstyle{definition}
\newtheorem{definition}[theorem]{Definition}
\theoremstyle{remark}
\newtheorem{remark}[theorem]{Remark}
\newtheorem{question}[theorem]{Question}
\newtheorem{example}[theorem]{Example}
\numberwithin{equation}{section}
\newcommand{\LL}{\mathbb L}
\newcommand{\CC}{\mathbb C}
\newcommand{\RR}{\mathbb R}
\newcounter{rcount}
\newcounter{rcountsave}
\newcommand{\skull}{\mathfrak{s}}
\begin{document}

\title{Arboreal singularities from Lefschetz fibrations}

\author{Vivek Shende}

\date{}

\maketitle

\begin{abstract}
Nadler introduced certain Lagrangian singularities indexed by trees, and determined
their microlocal sheaves to be the category of modules over the corresponding tree quiver.  Another family of spaces
indexed by trees: the tree plumbings of spheres.  The Fukaya-Seidel category
of the Lefschetz fibration with this plumbing as fiber and all spheres as vanishing cycles is well
known to also be modules over the tree quiver.   Here we upgrade this matching of categories to 
a matching of geometry. 
\end{abstract}

\section{Introduction}

Fronts for the Legendrian links of what Nadler calls the $A_1, A_2, A_3$ {\em arboreal singularities}:

\begin{center}
\includegraphics[scale=0.25]{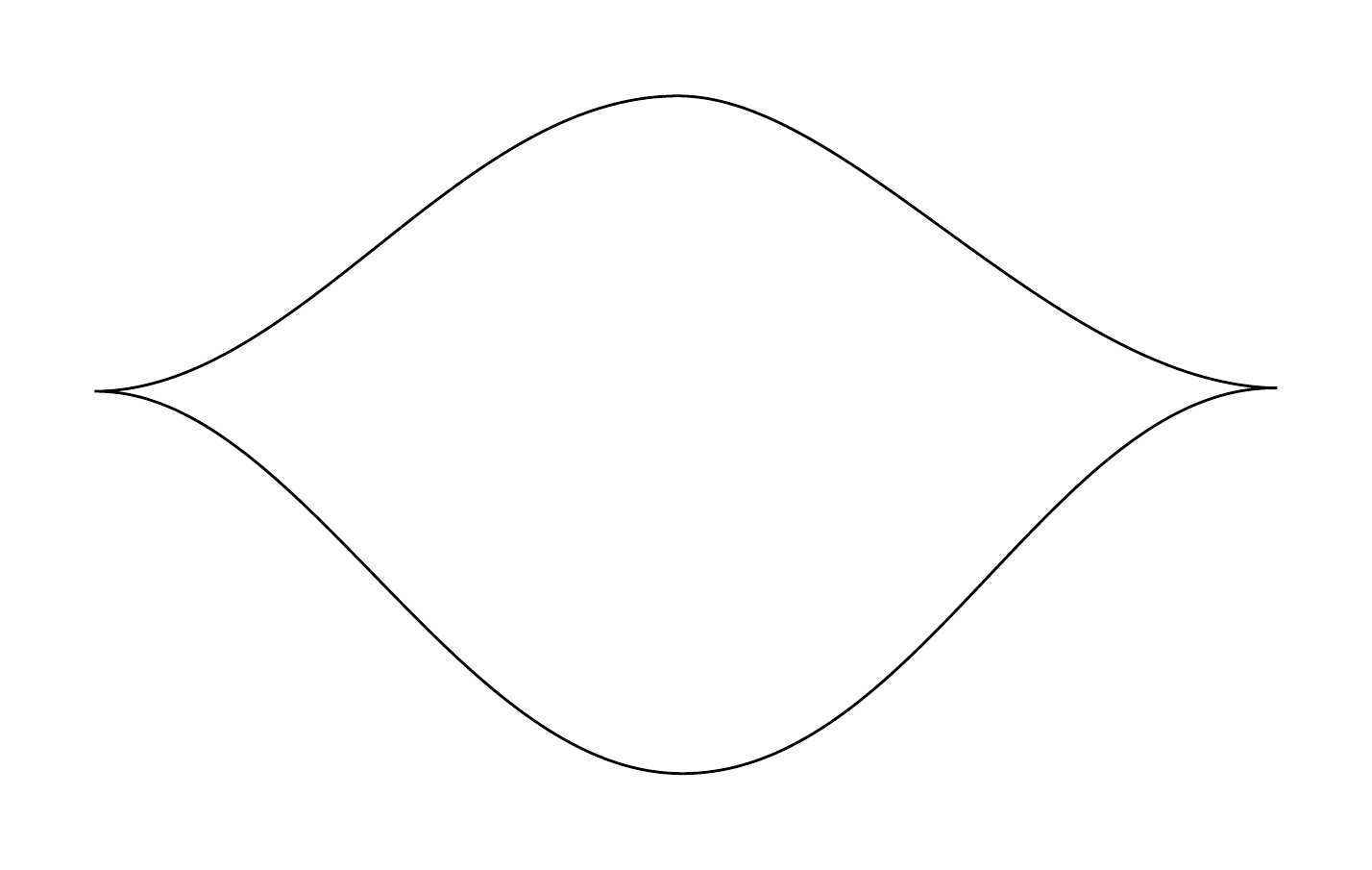} \includegraphics[scale=0.25]{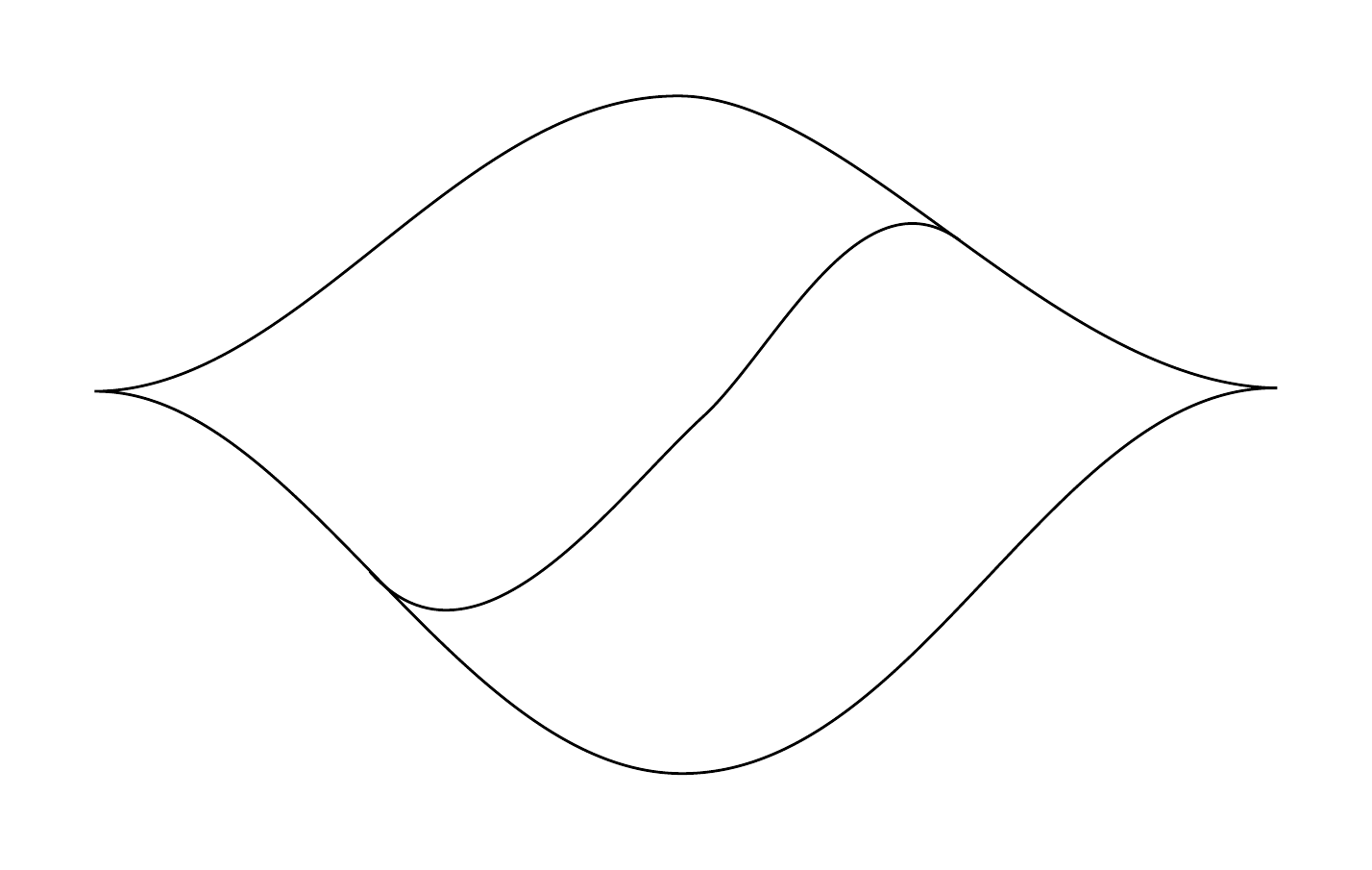} \includegraphics[scale=0.25]{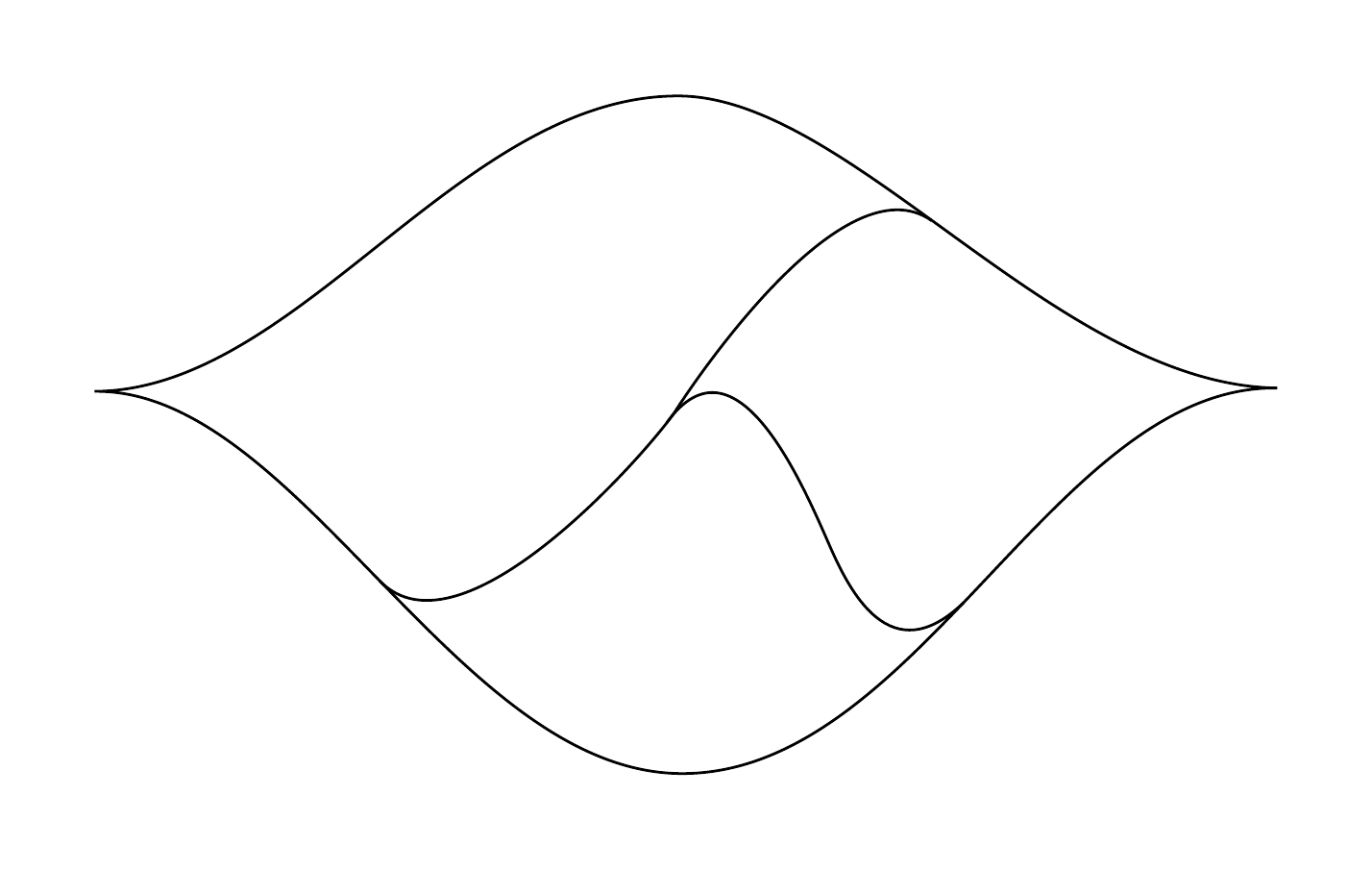} 
\end{center}

More generally, there is an arboreal singularity for each tree.  The ones above correspond to the 
trees $\bullet$, $\bullet \to \bullet$, and $\bullet \to \bullet \to \bullet$.  They are of interest
because they are expected to give in some sense deformation-generic  
models for Legendrian singularities, and for for skeleta of Weinstein manifolds \cite{nadler-noncharacteristic, starkston-arboreal, eliashbergweinsteinrevisited}.

The arboreal singularities are essentially defined as cones on the above pictures.  This has,
at first glance, little in common with what Arnol'd would have associated to the same trees and called 
the $A_n$ singularities --- these being given by the singular fibers of the functions $y^2 - x^{n+1}$.   
Nevertheless, there is a relation.  For example, one can see from the pictures that the homotopy type
of the link of the depicted arboreal singularity is the same as the Milnor fibre of the $A_n$ singularity.  

Let us note another hint that the two objects should be related.  Nadler calculated in \cite{nadlerarboreal} certain 
microlocal sheaf invariants associated to the arboreal singularity.  On the other hand, Seidel  in \cite{seidelbook} 
associates to a function, such as $y^2 - x^{n+1}$, a certain category which can be calculated by his categorification
of Picard-Lefschetz theory.  Both calculations yield the same result: modules over the corresponding tree quiver. 

The purpose of this note is to clarify the geometric relationship between these structures.  First we must abstract from
the singularity theoretic setting only the relevant symplectic geometry.  A small perturbation of the function whose 
singularity we are studying gives a Lefschetz fibration $f: \CC^n \to \CC$.  Recall more abstractly that to a Liouville manifold
$F$ and an ordered collection of Lagrangian spheres $S_1, \ldots, S_k \subset F$, it is possible to form an exact
symplectic Lefschetz fibration $X \to \CC$ with general fibre $F$ and vanishing cycles $S_1, \ldots, S_k$. 
(Roughly speaking, to make $X$ one takes $F$ times a unit disk and attaches a handle along the Legendrian 
$S_r$ in the fiber at $L \times e^{2\pi i r / k}$.)  

In fact, we want to discard the Lefschetz fibration as well, and consider only the pair $(X, F)$.  Technically we ask $X$
to be the completion of a Liouville domain, with a domain completing to $F$ contained in the contact boundary $\partial_\infty X$ of the domain
completing to $X$.  The deformation equivalence class of this pair already
suffices to determine the derived Fukaya-Seidel category of the fibration \cite{sylvanthesis, gpssectorsoc}.\footnote{Note 
that there is a difference between deformation equivalence of Lefschetz fibrations and of Liouville pairs.  This is reflected in the 
difference between given the Fukaya-Seidel category together with a generating exceptional collection up to mutation, 
and just being given the category.  In any case, we consider here deformation equivalence of pairs.}   The geometry of 
such pairs is studied e.g. in \cite{avdek, sylvanthesis, gpssectorsoc, eliashbergweinsteinrevisited}.  
They are a symplecto-geometric version of manifolds with boundary; in particular, the cotangent bundle of a manifold
with boundary  yields such a pair. 

Recall that to a Liouville manifold $(X, \omega = d\lambda)$ one associates the {\em skeleton} $\skull(X)$, defined to be the locus
of points which do not escape under the Liouville flow.  Similarly, to a Liouville pair $(X, F, \omega = d\lambda)$, one associates
the {\em relative skeleton} $\skull(X, F)$, given by the locus of points in $X$ which do not escape to $\partial_\infty X \setminus \skull(F)$ 
under the Liouville flow.  Note that the skeleton and relative skeleton are {\em most certainly not} deformation invariants, though it is
true that $\skull(F)$ is determined by the contact structure on $\partial_\infty X$, rather than a contact form.

For a tree $T$, we write $\Pi_T$ for the plumbing of cotangent bundles of spheres with dual graph $T$.  Given an
ordering of the spheres, we may form the corresponding Lefschetz fibration, $(X, \Pi_T)$.  Note that re-ordering the 
spheres in such a way that intersecting spheres -- adjacent nodes of the tree -- are not interchanged evidently induces
an isotopy of the Lefschetz fibrations.  Given a rooting $\vec{T}$ of the tree $T$, we take any total order compatible with the
partial order induced by the rooting; by the previous remark, these lead to isotopic Lefschetz fibrations.  It is not difficult to see
that the total space of this fibration is just $\RR^{2n}$.  We write $(\RR^{2n}, \Pi_{\vec{T}})$ for this Liouville pair. 
Here we show: 

\begin{theorem} \label{arborlefschetz} 
Let $\vec{T}$ be a rooted tree.  Then $(\RR^{2n}, \Pi_{\vec{T}})$ is deformation equivalent to a Liouville pair
whose relative skeleton is the arboreal singularity associated to $\vec{T}$. 
\end{theorem}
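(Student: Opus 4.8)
The plan is to compute the relative skeleton of the Lefschetz fibration as an explicit union of Lagrangian discs capping the vanishing cycles, and then to recognise that union --- after a Weinstein deformation of the pair --- as Nadler's arboreal singularity, by induction on the tree.

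The first ingredient is the description of the relative skeleton of a Lefschetz fibration. If $(X,F)$ is the Liouville pair underlying an exact symplectic Lefschetz fibration with fibre $F$ and ordered vanishing cycles $S_1,\dots,S_k\subset F$, then $X$ is built, up to Weinstein deformation, from $F\times D^2$ by attaching a critical Weinstein handle along the Legendrian lift of each $S_r$, the cocore of the $r$-th handle being the Lefschetz thimble $\Delta_r$ --- a Lagrangian disc with $\partial\Delta_r=S_r$. Since attaching a critical handle glues its cocore onto the skeleton, one gets, up to deformation of the pair and after isotoping the $S_r$ into the fibre skeleton,
$$\skull(X,F)\;=\;\skull(F)\;\cup_{S_1}\Delta_1\;\cup_{S_2}\cdots\;\cup_{S_k}\Delta_k.$$
For $(\RR^{2n},\Pi_{\vec T})$, after deforming $\Pi_T$ so that its skeleton is the tree-arrangement $\bigcup_{u\in V(T)}S_u$ of zero-section spheres, the vanishing cycles are those same spheres, so the relative skeleton is $\big(\bigcup_u S_u\big)\cup\bigcup_u\Delta_u$: the tree-arrangement of Lagrangian spheres with each sphere capped off by a thimble. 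I would keep the vanishing paths and this deformation flexible so as to impose local normal forms in the induction.

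Now induct on the number of vertices of $T$, building $\vec T$ one leaf at a time starting from the root. The base case $T=\bullet$ is the standard pair from $\textstyle\sum_{i=1}^n z_i^2\colon\CC^{n}\to\CC$, whose relative skeleton $S^{n-1}\cup_{S^{n-1}}\Delta_1$ one checks to be the arboreal space of the one-vertex tree. For the inductive step, write $\vec T=\vec T'\sqcup\{v\}$ with $v$ a leaf attached at a vertex $w$ of $T'$, using a compatible total order with $v$ last. Then $\Pi_T$ is $\Pi_{T'}$ with a new $T^*S^{n-1}$ plumbed along $S_w$; the new vanishing cycle $S_v$ is the new zero section, meeting the old skeleton transversally in a single point, which we arrange to be a generic point of $S_w$ (away from the other spheres through $w$); and the new thimble $\Delta_v$ caps it off. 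So $\skull(X_T,\Pi_T)$ is obtained from $\skull(X_{T'},\Pi_{T'})$ by plumbing one more sphere near a generic point of $S_w$ and attaching a disc along it, with nothing changed away from a neighbourhood of that point. On the arboreal side, Nadler's recursion builds $\mathcal A_{\vec T}$ from $\mathcal A_{\vec T'}$ by precisely such a move: a modification supported near the stratum indexed by $w$, adjoining one more sheet --- a transverse half-space carrying the arboreal space of the new branch --- along that stratum. Matching the two at $w$: by induction $\skull(X_{T'},\Pi_{T'})$ is in arboreal normal form near $S_w$, and adding the single thimble $\Delta_v$ increases the local sheet count along that stratum by one, which is exactly the one-step extension in the recursion. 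That the outcome depends only on $\vec T$ and not on the chosen compatible total order is the isotopy remark preceding the theorem.

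The main obstacle is this local matching. The space $\big(\bigcup_u S_u\big)\cup\bigcup_u\Delta_u$ coming out of the construction is only controlled up to Lagrangian isotopy of the thimbles and Weinstein deformation of the fibre skeleton, so it need not be in arboreal normal form on the nose; the substance of the argument is to produce a Weinstein deformation of the pair, supported near the stratum of the newly attached leaf and fixing everything else, after which the relative skeleton agrees with $\mathcal A_{\vec T}$ literally. This needs (a) a sufficiently rigid model for the Weinstein structure on $\Pi_T$ realising its skeleton as the tree-arrangement of zero sections with standard local charts at the plumbing points, and (b) a direct identification, near $S_v\cap S_w$, of $S_w\cup S_v\cup\Delta_v$ together with the ambient skeleton with the standard local chart of the arboreal singularity for the subtree rooted at $w$, crossed with a Euclidean factor. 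Granted these local normal forms, the gluing description of $\skull(X,F)$, the base case, and the bookkeeping of rootings and orderings are routine.
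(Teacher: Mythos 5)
Your first step (the relative skeleton of the pair is, up to deformation, the fibre skeleton coned off to the stop together with the Lefschetz thimbles capping the vanishing cycles) is essentially the paper's starting point as well: this is what Section \ref{plumbarms} extracts from the Casals--Murphy front algorithm, and the resulting ``armadillo'' front is exactly the picture of plumbed spheres each capped by a thimble disc. (Minor slip: the skeleton acquires the \emph{cores} of the critical handles, not the cocores, and it is the cone over $\skull(\Pi_{\vec T})$ rather than $\skull(\Pi_{\vec T})$ itself; as a sketch this is harmless.) The genuine gap is in the second half. You reduce the theorem to an inductive ``local normal form'' claim: that adding a leaf $v$ at $w$ changes both sides only near the stratum of $w$, the arboreal side by adjoining one transverse half-space along that stratum, and that matching the two is then routine. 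But this mischaracterizes the arboreal recursion and begs the actual question. In the arboreal singularity the sheet for a deeper vertex is \emph{not} attached along a locus contained in the sheet of its parent: its attaching hypersurface runs with a corner across the parent sheet and the sheets of the ancestors (this is visible in the paper's recursive front construction, where the hypersurface for $v$ ends partly on the hypersurface for $w$ and partly on the big unknot), and this corner-crossing is precisely what distinguishes arboreal singularities from a plumbing-like transverse arrangement. On the symplectic side, the new sphere $S_v$ and its thimble $\Delta_v$ are global objects (the thimble is attached along all of $S_v$ and runs out to the stop), and their interaction with the previously built skeleton is not confined to a neighbourhood of the point $S_v\cap S_w$. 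So the deformation you need is not local, and you have explicitly deferred producing it (``the main obstacle,'' ``the substance of the argument''), which means the proposal stops exactly where the theorem's content begins.

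For contrast, the paper's proof is entirely about constructing that deformation explicitly, at the level of fronts of the Legendrian link of the relative skeleton, via ``ribbotopies'' (isotopies of the Liouville hypersurface $\Pi_{\vec T}\subset\partial_\infty\RR^{2n}$, which are deformations of the pair): armadillo $\rightsquigarrow$ mothership $\rightsquigarrow$ coordinated mothership $\rightsquigarrow$ arboreal link. Notably, the paper's Section \ref{armmo} records that the naive induction of the kind you propose gets obstructed --- a deeper unknot blocks the move one would like to perform at its parent --- and a non-obvious global maneuver (sliding the nested unknots around the sphere by the Reeb flow to the antipodal locus) is required before the parent-level move can be done. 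This is concrete evidence that your ``supported near the stratum of the newly attached leaf, fixing everything else'' strategy would fail as stated, and that the missing local-to-global normal form argument cannot be treated as bookkeeping.
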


\begin{remark}
As explained in \cite{nadlerarboreal}, the arboreal link admits a cover by arboreal singularities of lower dimension, indexed by correspondences
of trees.  It would be interesting to understand how this cover and these correspondences interact with deformation to a Lefschetz fibration.  
\end{remark}

\vspace{2mm}
{\bf Acknowledgements.}  
I thank 
Roger Casals, Yakov Eliashberg, Sheel Ganatra, Peter Lambert-Cole, Emmy Murphy, John Pardon, and Laura Starkston for helpful discussions and comments on earlier versions of this note. 

\begin{remark}
Some remarks on the history of Thm \ref{arborlefschetz}. 
Paul Seidel noted the relationship between the arboreal link and the Milnor fiber of $A_2, A_3$
singularities after a talk of John Pardon, who then asked me whether such a thing might hold in general.  

Thm \ref{arborlefschetz} was announced in \cite[Rem. 1.6]{gpssectorsoc}.  Originally we planned to use it to develop properties 
of arboreal covers of Weinstein manifolds, both to prove that the Fukaya category cosheafifies over an arboreal skeleton,
and to compare the resulting cosheaf to the one coming from microlocal sheaf theory \cite{kashiwara-schapira, nadler-wrapped, shende-microlocal}.  Since
this time, our strategy to prove the cosheaf property  has evolved so as not to require arborealization 
(see \cite[Thm. 1.20]{gpsstructural} for a representative special case)  and in addition due to \cite{gpstheater} (and \cite{shende-microlocal, nadler-shende})
we no longer require any special form for the skeleton to make the local identification with the sheaf category.  In particular, this result
is no longer necessary for that programme.

Of course, the deformation-genericity of arboreal singularities means they are of interest  far beyond their role in categorical 
calculations.  Perhaps the above result may clarify the nature of these objects.  At the least it decreases the cognitive dissonance
caused by the fact that after \cite{nadlerarboreal}, the term ``$A_n$ singularity'' acquired more than one meaning. 
\end{remark}

\section{Some singular Legendrians}

We typically write $\vec{T}$ to mean a rooted tree.  
If $v$ is any vertex of the tree, we write $\vec{T}(v)$ for the sub-tree growing from $v$ (trees grow away from the root).  

By a {\em shrub} we mean a tree with all vertices at distance at most one from the root.  We write 
$\vec{T}_{\le 1}(v)$ for the shrub growing from a given vertex.

By a singular Legendrian, we mean a finite union of isotropic submanifolds which is the closure of its smooth Legendrian
locus.   Here we present three different families of singular Legendrians associated to rooted trees; we define them inductively 
just by drawing fronts.

\subsection{Arboreal singularities}

\begin{center}
\begin{figure}
\begin{tabular}{cccc}
\includegraphics[scale=0.25]{A1} & \includegraphics[scale=0.25]{A2} & \includegraphics[scale=0.25]{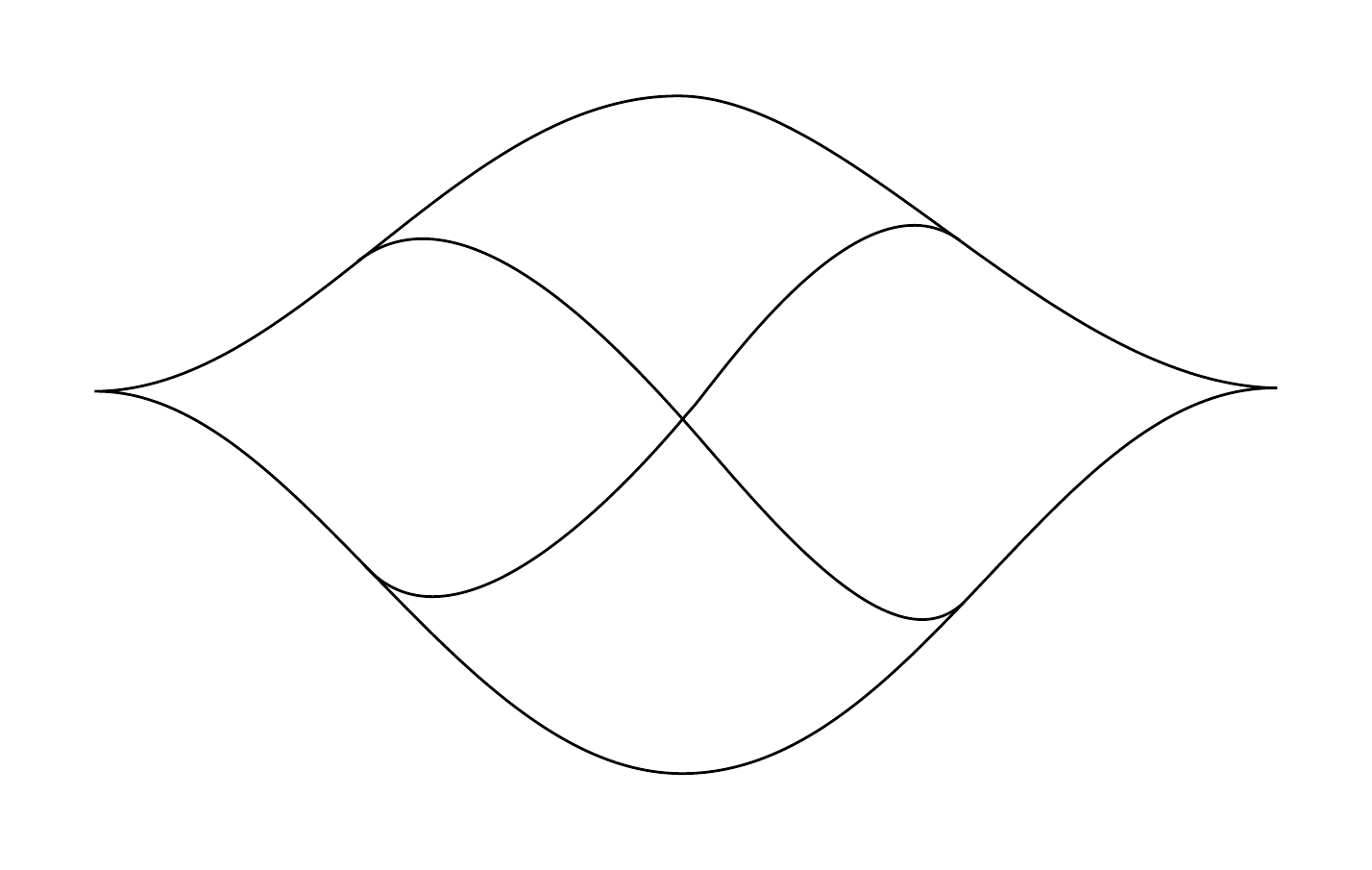} & \includegraphics[scale=0.25]{A3} \\
$\bullet$ & $\bullet \to \bullet$ & $\bullet \leftarrow \bullet \to \bullet$ & $\bullet \to \bullet \to \bullet$
\end{tabular}
\caption{\label{arblinks} Links of the arboreal singularities for various trees.}
\end{figure}
\end{center}
\vspace{-1cm}

\begin{definition}
Let $\vec{T}$ be a rooted tree, and fix $n \ge |\vec{T}| -1 $.  We will define the {\em arboreal link} corresponding to $\vec{T}$
as a certain explicit Legendrian by giving a front projection to $\RR^n$.  The construction is recursive in
nature.  We assume $n \ge 2$.  

We take the first $|\vec{T}|-1$ coordinates of $\RR^n$ to be indexed by the non-root vertices of the tree.  In front projections,
one direction is distinguished (``no vertical tangencies''); we take the vertical direction to be given by the sum of the coordinates. 

Everything is built from the front associated to $\bullet \to \bullet$ in Figure \ref{arblinks}. 
Note that the diagonal line dividing the big unknot is, away from the big unknot, a disk in a coordinate axis (recall that our coordinate hyperplanes
are always slanted).  The choice of coordinates is such that this axis is the one on which the leaf vertex coordinate vanishes.

For shrubs the construction is as follows.  Take the front for $\bullet \to \bullet$, and suspend
it appropriately so it becomes a front in the desired dimension.  Now it is a big unknot (saucer) with a slice through the middle, which is
mostly just a disk in a coordinate hyperplane.  By permuting coordinates, this could be any coordinate hyperplane.  The desired front
is obtained by taking the union the fronts corresponding to the hyperplanes named by the non-root vertices.  For an example when $n=2$, 
see the front associated to $\bullet \leftarrow \bullet \to \bullet$ in Figure \ref{arblinks}.

In general we proceed as follows.  First apply the above construction to the shrub obtained from pruning all vertices 
of distance $>1$ from the root.  Now, for each vertex $v$ which is one away from the root, re-focus attention on the unknot bounded by 
(the lower) half the original unknot plus the hypersurface associated to $v$.   Working inside this new unknot, apply the algorithm to the tree 
$\vec T(v)$ which grows from $v$.

The careful reader may have noticed that the unknot we used for the second and later steps of the algorithm is singular -- it has a corner 
where the middle piece meets the original unknot.  Said reader may convince themselves that this leads to no ambiguity in the 
description.  One possibility is to make even the original unknot singular, e.g. by drawing the (non-generic) front as a cube stood
on its corner.   $\triangle$
\end{definition}

\begin{remark}
Nadler originally introduced the arboreal singularities as Legendrian singularities.  We have described
the Legendrian link of the Lagrangian projection of this entity.  
\end{remark}

\begin{figure}
\begin{center}
\includegraphics[scale=0.5]{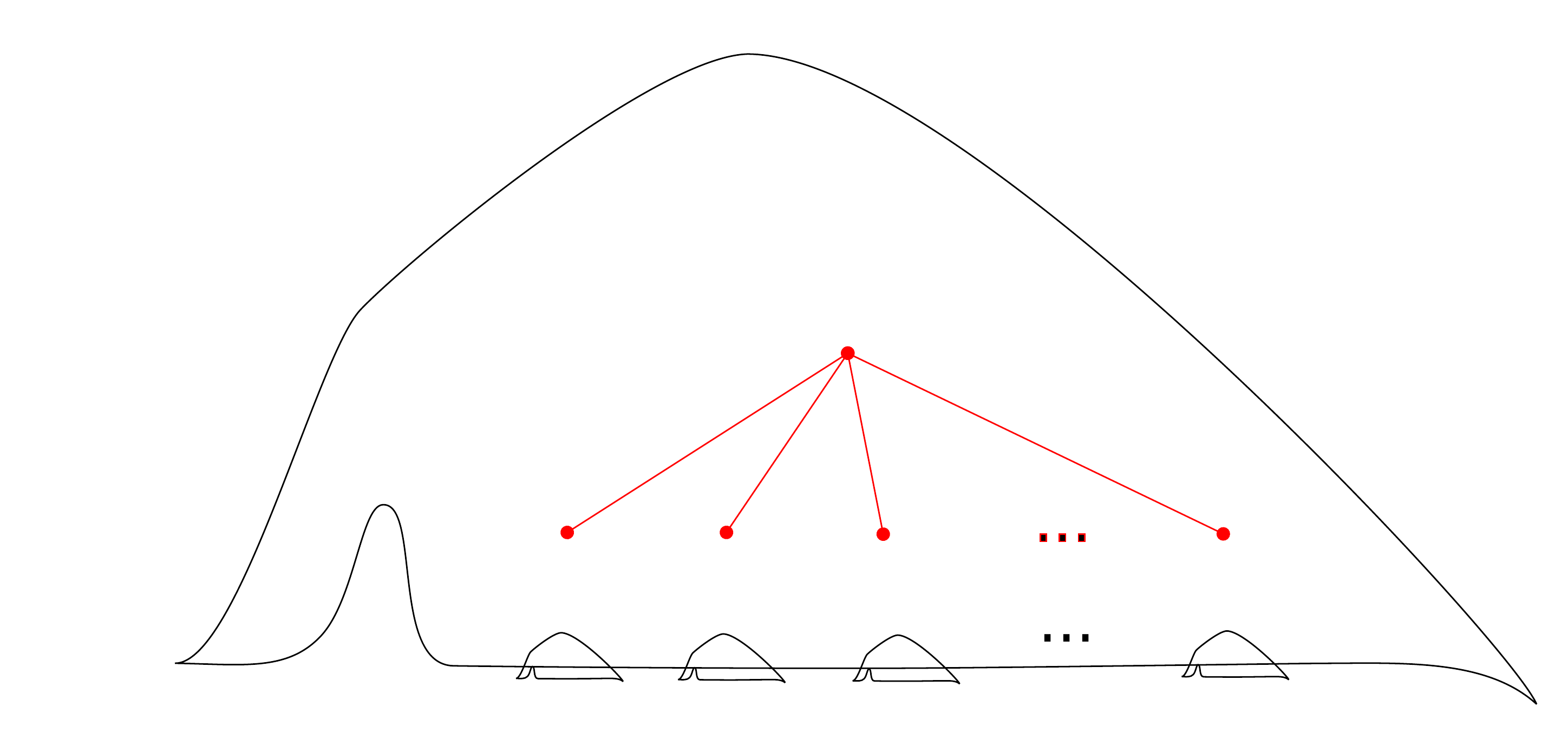}
\caption{\label{armadillo} The armadillo of a shrub.}
\end{center}
\end{figure}

\subsection{Armadillos}

\begin{definition} \label{plumbfront}
Let $\vec{T}$ be a rooted tree.  Extend the partial order on the vertices of $\vec{T}$ coming from the tree structure to a total order.  From this
data we define a front recursively as follows.  

To the shrub growing from the root, we associate the picture in Figure \ref{armadillo}. 
Here the smaller unknots are ordered left to right matching the total ordering of the vertices.  The full picture associated to $\vec T$ is 
given by now recursively applying this construction to the trees which grow from the vertices at distance one from the root, except
now using the depicted small unknots as the big unknot.  $\triangle$
\end{definition}

\begin{remark}
The picture is in a 2d front plane, but evidently this prescription makes sense for any number of dimensions
$n \ge 2$.  Note that (up to a non-contractible space of isotopies) the total ordering of the vertices of $\vec T$ becomes irrelevant in
higher dimension. 
\end{remark}

\subsection{Motherships}

It will be convenient to interpolate between the arboreal singularities and the armadillos by introducing another class of singular Legendrians
indexed by trees.  As before, we give an inductive definition. 

\begin{definition} \label{motherfront} {\bf (motherships)} 
Let $\vec T$ be a rooted tree.  Extend the partial order on the vertices of $\vec T$ coming from the tree structure to a total order.  From this
data we define a front recursively as follows.  

To the shrub growing from the root, we associate the picture in Figure \ref{mothership}.
Here the smaller unknots are ordered left to right matching the total ordering of the vertices.  The full picture associated to $\vec{T}$ is 
given by now recursively applying this construction to the trees which grow from the vertices at distance one from the root, except
now using the depicted small unknots as the big unknot.  $\triangle$
\end{definition}

\begin{center}
\begin{figure}
\includegraphics[scale=0.5]{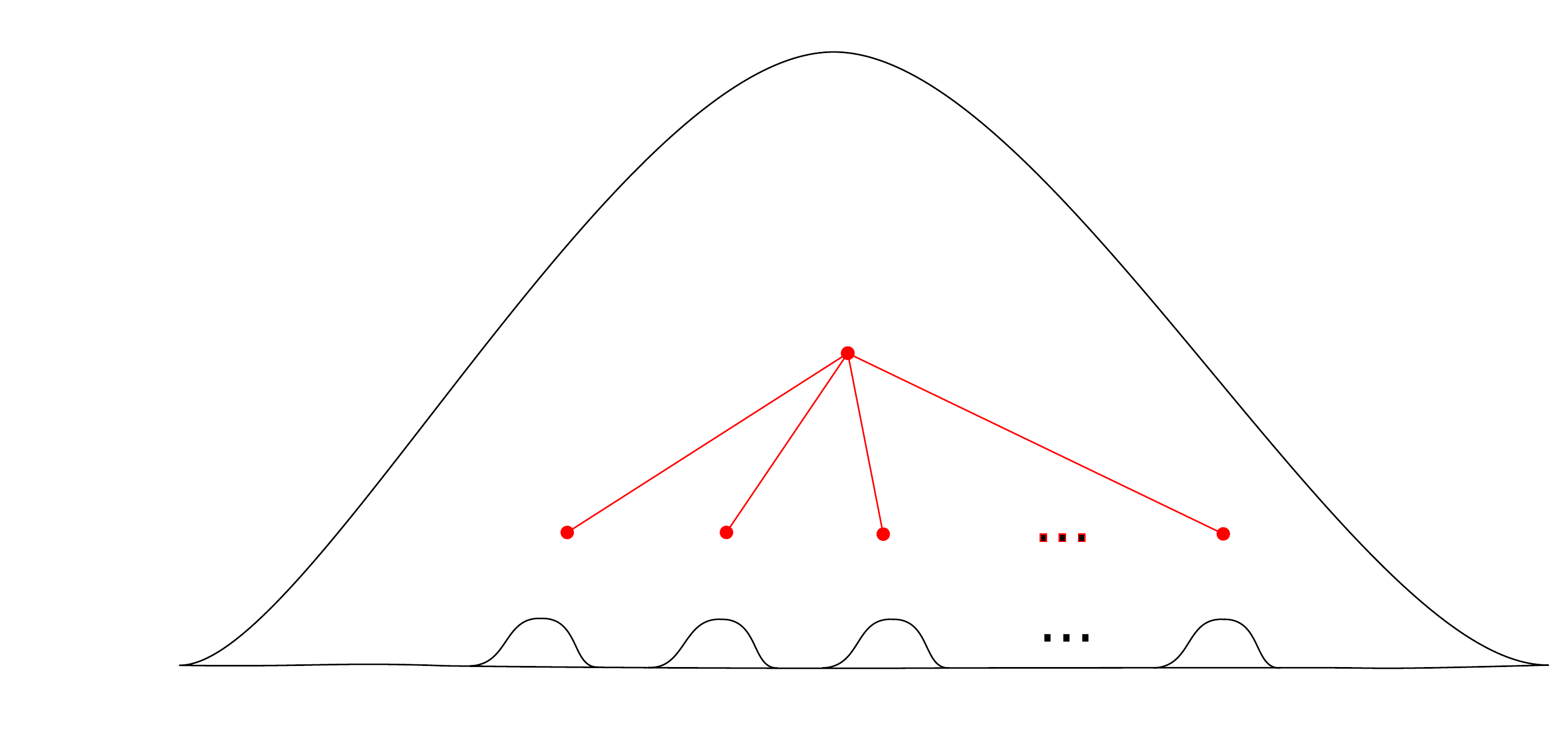}
\caption{\label{mothership} The mothership of a shrub.}
\end{figure}
\end{center}

\vspace{-1cm}

\begin{remark}
In order that the big unknot have exactly the same front as the small unknots, it must be taken to be singular as a Legendrian
above its cusps.  
\end{remark}

The total ordering of branches becomes irrelevant in front dimension $\ge 2$.  
In higher dimensions, it 
will be convenient to have a variant where the loci of attaching smaller saucers is prescribed in a way
more similar to the description of the arboreal link.

\begin{definition} {(\bf coordinated motherships)}
Let $\vec T$ be a rooted tree, and choose some $n \ge |\vec T| - 1$.  We will draw a front in $\RR^n$.  As for the arboreal singularity, 
non-root vertices of the tree index coordinates of $\RR^n$ (excess coordinates go unindexed) and the vertical direction
is the sum of the coordinates. 

Begin with the front of a flying saucer, centered about the origin.  The bottom half of the saucer projects vertically to a disk of some fixed
radius in the plane $\sum x_i = 0$; correspondingly I will use the $x_i$ as coordinates on this bottom half (subject to the condition that
they sum to zero).  Inscribe a simplex of the same dimension in this disk, with the facets being given by setting some coordinate to 
a constant value.  Mark a point at the center of each facet.   

In short, there are now $n$ marked points on the bottom of the flying saucer, one for each coordinate.   For each of these which corresponds
to a vertex of the tree with distance one from the root, take a small disk around it, and use it as the base for a (singular) saucer.  (The top 
should be a disk of the same size).  

Now recursively apply this algorithm for the trees growing from the vertices at distance one for the root, in each case replacing the original
big saucer with the small one that was created in the previous step. $\triangle$
\end{definition}

\begin{figure}
\begin{center}
\begin{tabular}{ccc}
$\bullet \to \bullet$ & $\bullet \to \bullet \to \bullet$ & $\bullet \leftarrow \bullet \to \bullet$ \\
\includegraphics[scale=0.3]{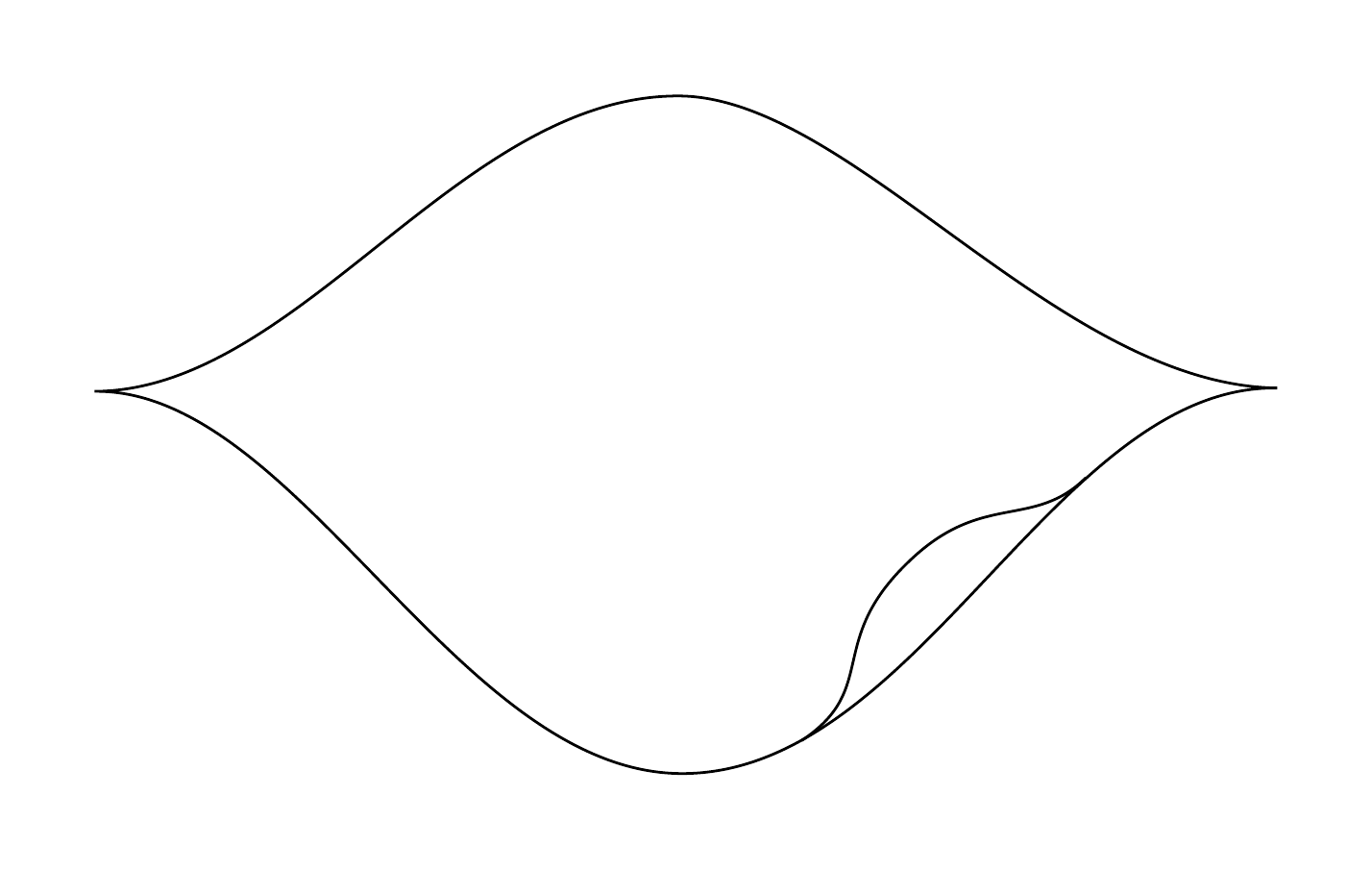} & \includegraphics[scale=0.3]{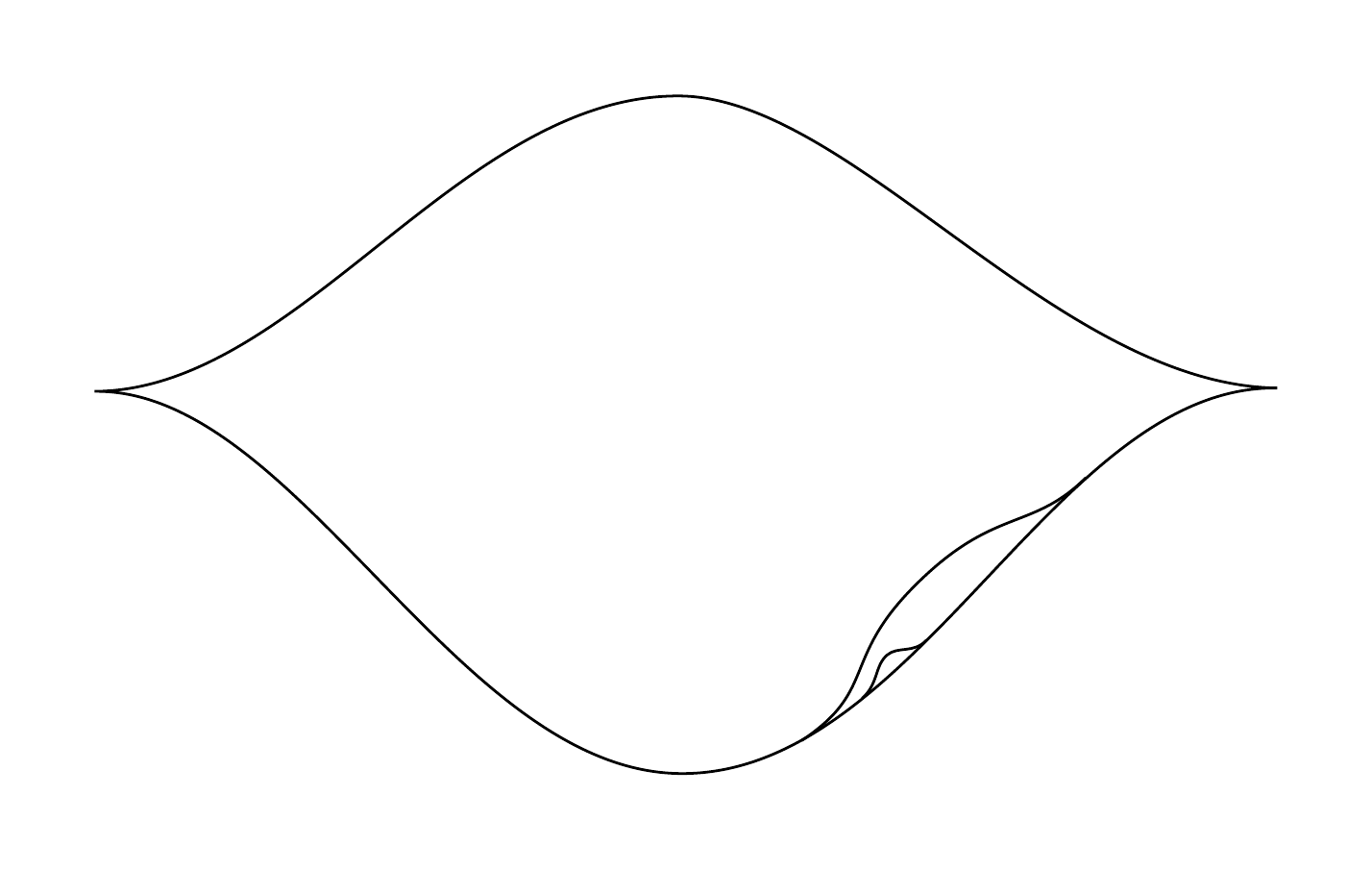} & \includegraphics[scale=0.3]{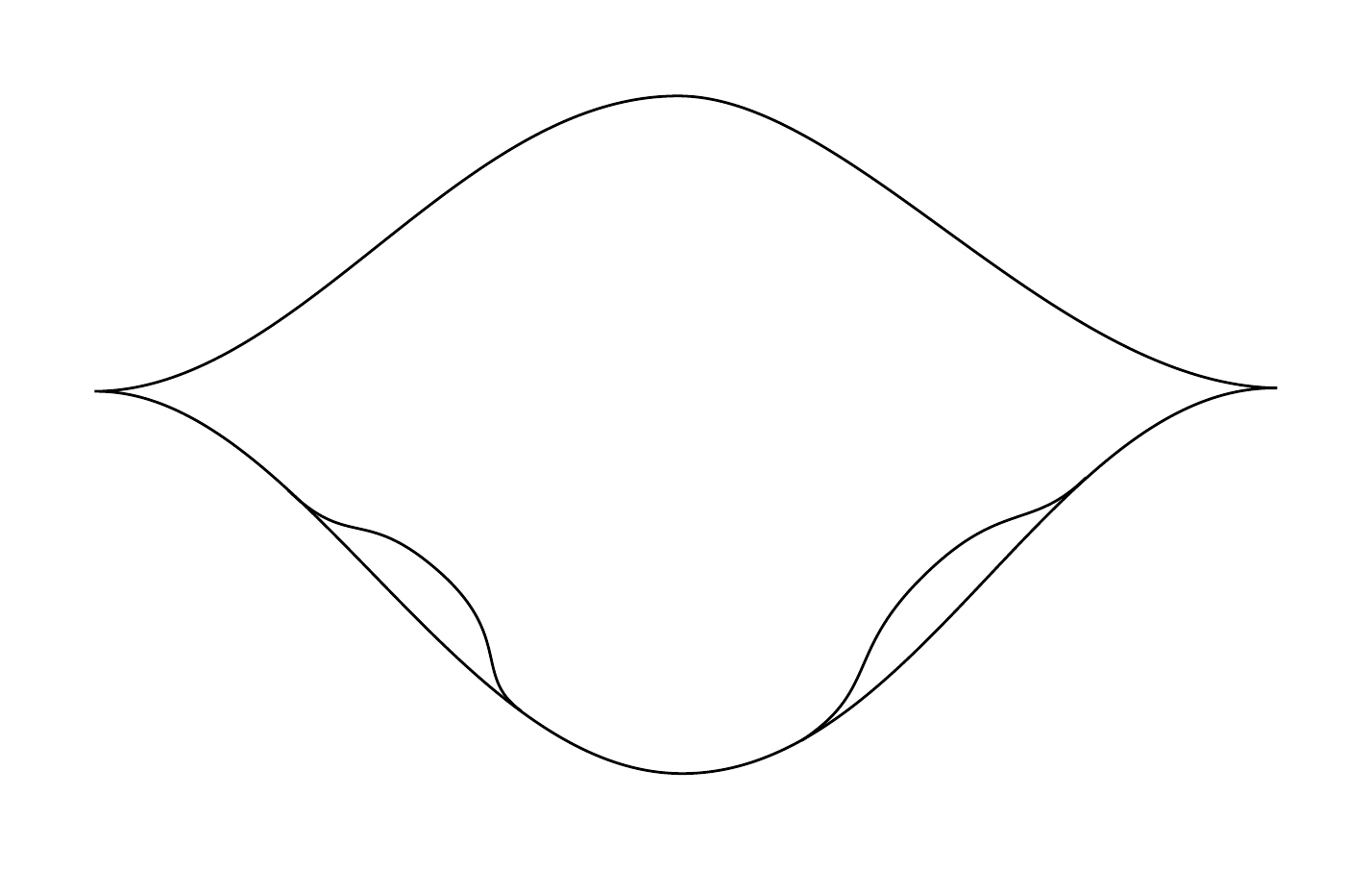}
\end{tabular}
\end{center}
\caption{\label{coordmother} Coordinated motherships for $n=2$.}
\end{figure}

\section{Ribbons}

\subsection{Generalities}

\begin{definition}
Let $V$ be contact and $\Lambda \subset V$ be a singular Legendrian.  A {\em ribbon} for $\Lambda$ is a
codimension one submanifold $R \subset V$ such that some local contact form near $\lambda$ determines 
a Liouville structure on $R$ with skeleton $\Lambda$. 
\end{definition}

We have the following standard facts: 

\begin{lemma}
The Reeb vector field is transverse to any ribbon
\end{lemma}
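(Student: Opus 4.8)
The plan is to show that the ribbon condition forces the defining hypersurface to be everywhere transverse to the Reeb direction, by a purely pointwise linear-algebra argument; the only thing requiring care is unwinding what ``a local contact form determines a Liouville structure on $R$'' means. First I would fix notation: let $\alpha$ be a local contact form near $\Lambda$ with $\ker\alpha = \xi$, and let $R_\alpha$ denote its Reeb vector field, so that $\iota_{R_\alpha}\alpha = 1$ and $\iota_{R_\alpha}d\alpha = 0$. The key observation is that the Liouville structure on the ribbon $R$ determined by $\alpha$ is the one whose Liouville $1$-form is the restriction $\alpha|_R$; this is the precise content of ``determines,'' and it is the one place where one must be slightly careful to match conventions. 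Granting this, $d(\alpha|_R) = (d\alpha)|_{TR}$ is a symplectic form on $R$, hence nondegenerate at every point of $R$. (This is a statement about $d\lambda$, not about the Liouville vector field, so there is no issue along the skeleton $\Lambda$ itself, where the latter degenerates but the symplectic form does not.)

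Next I would invoke the standard fact that, since $\alpha$ is a contact form on the $(2n+1)$-dimensional manifold $V$, the $2$-form $d\alpha$ has constant maximal rank $2n$, so that at each point $p$ its kernel is exactly the line $\mathbb R\cdot R_\alpha(p)$. For a hypersurface $R$ through $p$, the bilinear form $(d\alpha)|_{T_pR}$ is then nondegenerate if and only if $T_pR \cap \mathbb R\cdot R_\alpha(p) = 0$; and because $R$ has codimension one, this is equivalent to $R_\alpha(p)\notin T_pR$, i.e. to transversality of $R_\alpha$ to $R$ at $p$.

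Combining the two points: nondegeneracy of $(d\alpha)|_{TR}$ at every point of $R$ is the same thing as transversality of $R_\alpha$ to $R$ at every point of $R$, which is the assertion of the lemma. I do not expect any genuine obstacle here: the argument is entirely pointwise and uses nothing about the singular structure of $\Lambda$. If anything, the single point worth stating explicitly — and the one I would want to confirm against the paper's conventions — is that the Liouville $1$-form attached to a ribbon is literally $\alpha|_R$, since that is exactly what converts the word ``ribbon'' into the usable statement ``$(d\alpha)|_{TR}$ is symplectic.''
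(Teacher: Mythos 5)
Your argument is correct and is essentially the paper's own proof: the Reeb field spans the kernel of $d\alpha$, while the ribbon condition makes $d\alpha|_{TR}$ nondegenerate, so the Reeb vector can never be tangent to the codimension-one hypersurface $R$, hence is transverse. The extra care you take in identifying the Liouville form as $\alpha|_R$ matches the paper's intended convention; there is no divergence in method.
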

\begin{proof}
Recall by definition the Reeb vector field is in the kernel of $d \lambda$.  
Since $d\lambda|_R$ is nondegenerate, the Reeb field cannot be tangent to $R$ at any point,
thus is transverse to $R$.  
\end{proof}

\begin{lemma}
A ribbon determines a contact embedding of (an neighborhood of $R$ in the) contactization $R \times \RR \to V$ carrying the skeleton of $R$ to $\LL$.  
\end{lemma}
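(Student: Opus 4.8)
The plan is to build the embedding directly from the Reeb flow of the ambient contact form, which is the standard normal form for a hypersurface transverse to a Reeb vector field. Fix a local contact form $\alpha$ (defined near $\Lambda$, and hence, after shrinking $R$ if need be, on a neighborhood of $R$) exhibiting $R$ as a ribbon, and set $\lambda := \alpha|_R$, so $(R,\lambda)$ is a Liouville manifold with skeleton $\skull(R) = \Lambda$. By the preceding lemma $R$ is transverse to the Reeb field $R_\alpha$ of $\alpha$, so the Reeb flow $\phi_t$ of $R_\alpha$ gives a well-defined map $\Psi(p,t) := \phi_t(p)$ on an open neighborhood of $R \times \{0\}$ in $R\times\RR$ — the flow domain, which I do not assume to be a product since I make no compactness hypothesis on $R$. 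The first step is to observe that $\Psi$ is a local diffeomorphism along $R\times\{0\}$: at a point $(p,0)$ its differential is the identity on $T_pR$ and sends $\partial_t$ to $R_\alpha(p)$, which is transverse to $T_pR$, so $d\Psi_{(p,0)}$ is invertible. Since $\Psi$ restricts on $R\times\{0\}$ to the inclusion $R \hookrightarrow V$, it follows by a routine argument that $\Psi$ is an embedding on some open neighborhood of $R\times\{0\}$.

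The second, and main, step is to identify $\Psi^*\alpha$. Since $\iota_{R_\alpha}\alpha \equiv 1$ and $\iota_{R_\alpha}d\alpha \equiv 0$, we have $\mathcal L_{R_\alpha}\alpha = 0$, hence $\mathcal L_{\partial_t}(\Psi^*\alpha) = 0$: the form $\Psi^*\alpha$ is independent of $t$. Along $R\times\{0\}$ it restricts to $\lambda$ on $TR$ and pairs to $1$ with $\partial_t$, so $\Psi^*\alpha = \lambda + dt$ identically. Taking the $\RR$-coordinate $z := t$, this is precisely the defining contact form $dz + \lambda$ of the contactization of $(R,\lambda)$ (reflect in $z$ for the opposite sign convention). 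Hence $\Psi$ pulls $\ker\alpha$ back to the contact structure of the contactization, i.e. it is a contact embedding of a neighborhood of $R$ in the contactization into $V$. Finally, $\Psi$ carries $R\times\{0\}$ onto $R$, so it carries $\skull(R)\times\{0\} = \Lambda\times\{0\}$ onto $\Lambda \subset V$, as required.

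The essentially routine character of the argument means the only point calling for any care is the last sentence of the first paragraph: for a non-proper or non-compact $R$, a local diffeomorphism restricting to an embedding on $R\times\{0\}$ need not be injective on a whole neighborhood. In the situations of interest $R$ is a neighborhood of $\Lambda$ deformation retracting onto it, so one may first pass to a smaller such neighborhood — which does not change the skeleton — to ensure injectivity of $\Psi$; with that understood, the rest is the Reeb-flow computation above. Since the statement only promises an embedding of \emph{a neighborhood of $R$} in the contactization, this causes no loss.
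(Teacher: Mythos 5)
Your argument is correct and is exactly the paper's proof, which simply says ``pushing by Reeb sweeps out the desired embedding'': you flow $R$ by the Reeb field, check the map is a local diffeomorphism by transversality (the preceding lemma), and compute $\Psi^*\alpha = \lambda + dt$ via $\mathcal L_{R_\alpha}\alpha = 0$, identifying a neighborhood of $R\times\{0\}$ with part of the contactization. The extra care you take about injectivity for non-compact $R$ is a reasonable refinement the paper leaves implicit, and it does not change the approach.
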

\begin{proof}
Pushing by Reeb sweeps out the desired embedding. 
\end{proof}

We {\em do not} know whether any two ribbons are isotopic.  It is however possible to show the following, 
which already implies that no Floer theoretic invariants will depend on the choice of the ribbon. 

\begin{lemma}
Given any two ribbons $R, S$ for the same Legendrian, one can find ribbons $R', R''$ isotopic as ribbons
to $R$, such that  
$R' \subset S \subset R''$ and the inclusion $R' \subset R''$ is trivial (i.e. the Liouville flow on $R''$ gives an isotopy between them).  
\end{lemma}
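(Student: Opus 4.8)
The plan is to reduce everything to a neighbourhood of $\Lambda$, put it in the contactization model associated to $R$, realize $S$ as a graph there, and then produce $R''$ and $R'$ by extending that graph and flowing along Liouville vector fields.

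First I would apply the two preceding lemmas to $R$: since a ribbon determines a contact embedding of (a neighbourhood of itself in) its contactization, I get $j\colon \Op(R)\hookrightarrow V$ where $\Op(R)$ is a neighbourhood of $\widehat R\times\{0\}$ in $\widehat R\times\RR_t$, carrying contact form $\lambda_R+dt$, Reeb field $\partial_t$, $\widehat R\times\{0\}$ to $R$ and $\Lambda\times\{0\}$ to $\Lambda$. Because the skeleton of $S$ is also $\Lambda$, the Liouville flow of $S$ drags $S$ into the image of $j$; replacing $S$ by a deep contraction $S_0:=\phi^{Z_S}_{-T}(S)$, which is isotopic to $S$ as a ribbon by that very flow, I may assume $S\subset j(\Op(R))$. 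Since $S$ is transverse to the Reeb field, meets each Reeb trajectory at most once near $\Lambda$, and has $j^{-1}(\Lambda)=\Lambda\times\{0\}$, after one further contraction $S$ becomes (the $j$-image of) the graph $\Gamma_g=\{(x,g(x)):x\in U\}$ of a smooth function $g$ on a neighbourhood $U\subset\widehat R$ of $\Lambda$ that is a Liouville subdomain, with $g|_\Lambda=0$; the induced Liouville form on $\Gamma_g$ is $\lambda_R+dg$.

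Next I would extend $g$ to a function $\widetilde g$ on a large Liouville subdomain $D\subset\widehat R$ with skeleton $\Lambda$, agreeing with $g$ near $\Lambda$ and chosen — for instance locally constant outside a compact set — so that $\lambda_R+d\widetilde g$ is a Liouville form with skeleton $\Lambda$ and so that $\lambda_R+d(s\widetilde g)$, $s\in[0,1]$, interpolates between $\lambda_R$ and $\lambda_R+d\widetilde g$ through Liouville forms with skeleton $\Lambda$. Taking $D$ large, the graph $\{(x,\widetilde g(x)):x\in D\}$, pushed into $V$ by $j$ and then dilated by a large forward time of its own Liouville flow, is a ribbon $R''$ with skeleton $\Lambda$ containing all of $S$, and it is isotopic to $R$ as a ribbon via the path $\{(x,s\widetilde g(x)):x\in D_s\}$ followed by that dilation. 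Finally set $R':=\phi^{Z_{R''}}_{-\tau}(R'')$ for $\tau$ large: the Liouville flow of $R''$ contracts it toward $\Lambda$, and since $R''$ coincides with $S$ near $\Lambda$, for $\tau$ large the contracted copy lies in the region where $R''=S$, so $R'\subset S$; the inclusion $R'\subset R''$ is trivial by construction, and $R'\cong R''\cong R$.

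The step needing genuine care, and the one I expect to be the main obstacle, is maintaining equality of the skeleton with $\Lambda$ throughout the interpolations: the Liouville field of $\lambda_R+d(s\widetilde g)$ differs from $Z_R$ by the Hamiltonian field of $s\widetilde g$, which a priori pushes the skeleton off $\Lambda$. This is controlled by first contracting $S$ far enough that $g$ and its first derivatives are small along $\Lambda$, so that by compactness the perturbed Liouville field still has $\Lambda$ as its full non-escaping locus, and — where even this is not enough — by letting the witnessing contact form vary along the isotopy and invoking the contact isotopy extension theorem to straighten graphs while keeping each member a bona fide ribbon for $\Lambda$. The accompanying bookkeeping point, that after contracting $S$ one re-dilates $R''$ along its own Liouville flow to engulf the original $S$, is routine provided $R''$ is built with enough margin near $\Lambda$.
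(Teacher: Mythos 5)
Your starting point coincides with the paper's (one\-/sentence) proof: along $\Lambda$ both ribbons are transverse to the Reeb field, so after a deep contraction $S$ becomes the graph of a function $g$ (vanishing on $\Lambda$) in the contactization embedding determined by $R$. The trouble is in what you build on top of this. For $R''$: the embedding $j$ is only defined on a neighborhood of $R$ in its contactization, and the \emph{original} $S$ (as opposed to its contraction $S_0\subset S$) need not lie in the image of $j$, nor be graphical over $\widehat R$ away from $\Lambda$; so no graph over a subdomain $D\subset\widehat R$ can be expected to contain $S$. Your proposed repair --- dilate the graph ``by a large forward time of its own Liouville flow'' --- does nothing: the Liouville field of a hypersurface is tangent to it, so flowing never enlarges it as a subset of $V$, and there is no canonical way to re\-/embed the abstract completion. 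A construction that does work is different in kind: once you have a ribbon $R'\subset S$ which is a neighborhood of $\Lambda$ inside $S$, push $R'$ by an ambient contact flow extending $Z_S$ (for instance $Z_S+t\partial_t$ in the contactization of $S$, cut off); this flow preserves $S$ and $\Lambda$, and since the backward $Z_S$-flow carries the compact $S$ into $R'$ in bounded time, the image $R''$ contains $S$, is a ribbon for $\Lambda$, and is isotopic to $R'$.

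The more serious gap is your step that $\lambda_R+d(s\widetilde g)$ can be arranged to interpolate ``through Liouville forms with skeleton $\Lambda$,'' justified by contracting $S$ until $g$ and its derivatives are small along $\Lambda$ and then invoking compactness. This does not work: the skeleton is an infinite\-/time dynamical locus and is not stable under $C^1$-small perturbations of the Liouville form, and in any case $dg$ does not become small along $\Lambda$ under contraction, because $R$ and $S$ contain $\Lambda$ and are Reeb\-/transverse but need not be tangent to each other along $\Lambda$. Note the tension with the paper itself: if one could always join the graph of $g$ to the zero graph through ribbons for $\Lambda$, then combining with the (ambient) contraction isotopies one would conclude that any two ribbons are isotopic --- exactly the question the paper explicitly leaves open; so this step cannot be ``routine,'' and the appeal to contact isotopy extension does not help, since the issue is precisely that an extension need not preserve the condition ``skeleton $=\Lambda$.'' What the interpolation gives cheaply is only boundary convexity (a finite\-/time, checkable condition), i.e.\ a deformation of Liouville structures without control of the skeleton; the sandwiching form of the lemma, together with the triviality of $R'\subset R''$, is designed so that this weaker control suffices, and your argument needs to be reorganized around that (or around an honest proof of skeleton\-/preservation, which you have not supplied) rather than asserting it as a smallness estimate.
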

\begin{proof}
The point is that $R$ and $S$ must both have tangent spaces transverse to the Reeb flow along the skeleton, hence 
in a small neighborhood thereof, each will be graphical over the other in the aforementioned embedding of the contactization. 
\end{proof}

\begin{remark}
We say that the ribbon of a singular Legendrian is unique up to matryoshka. 
\end{remark}

A fundamental question is: 

\begin{question}
Which singular Legendrians admit a ribbon?
\end{question}

There are evident local obstructions (an example of John Pardon:  take many smooth Legendrian curves 
with varying second order behavior through a point; no surface can contain them all).  We do not know
whether there are global obstructions.  

The situation is of course even worse for the family version: 

\begin{question}
When does a family of singular Legendrians arise as the family of cores of an isotopy of ribbons?
\end{question}

\begin{definition}
We say a 1-parameter family of singular Legendrians which arises as the family of cores of an isotopy of ribbons
is a {\em ribbotopy}.\footnote{It moves the ribs of the skeleton.}
\end{definition}

Given a ribbon, two things we can do to construct a family of ribbons are the following.  
One is to apply an ambient contact isotopy.  The other is to apply a contact contact isotopy along
a contact level of the ribbon itself: 

\begin{lemma} \cite{cieliebakeliashberg} 
Let $(R, \lambda)$ be a Liouville domain and $R^{in} \subset R$ a subdomain.  
Then from a contact isotopy $\phi_t: \partial R^{in} \to \partial R^{in}$ one can construct a 1-parameter family $\lambda_t$ of Liouville forms
such that
the Liouville flow is unchanged away from a collar neighborhood of $\partial R^{in}$, and integrates to $\phi_t$ when 
traveling across this neighborhood. 
\end{lemma}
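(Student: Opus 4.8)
The plan is to put the Liouville structure into normal form near $\partial R^{in}$, to realize the given contact isotopy by its tautological lift to the symplectization, and then to splice that lift into the Liouville flow across a collar by a radial interpolation; the whole construction is carried out parametrically in $t$, so the family version costs nothing extra.

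First I would record the normal form. Since $R^{in}$ is a subdomain, its boundary is of contact type and the Liouville vector field $Z$ of $\lambda$ is outward-transverse to it; flowing by $Z$ identifies a two-sided collar of $\partial R^{in}$ in $R$ with $(\partial R^{in}\times(1-\epsilon,1+\epsilon),\,r\alpha)$, where $r$ is the interval coordinate, $\alpha:=\lambda|_{\partial R^{in}}$, the Liouville vector field is $r\partial_r$, and $R^{in}$ is the locus $\{r\le 1\}$. All modifications below take place inside $\partial R^{in}\times[1,1+\epsilon)$ (after, if necessary, a preliminary conformal rescaling of $\lambda$ near $\partial R^{in}$ to widen this collar, cf.\ the last paragraph), so that each $\lambda_t$ glues to $\lambda$ over the rest of $R$ with the Liouville flow unchanged there. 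Next, write $\phi_t^*\alpha=e^{g_t}\alpha$ for a smooth family $g_t\in C^\infty(\partial R^{in})$ with $g_0\equiv 0$. Then $\Phi_t(x,r):=(\phi_t(x),\,e^{-g_t(x)}r)$ defines symplectomorphisms of the symplectization $(\partial R^{in}\times\RR_{>0},\,d(r\alpha))$ which preserve $r\alpha$ on the nose, have $\Phi_0=\mathrm{id}$, depend smoothly on $t$, and restrict on each slice to $\phi_t$ up to the tautological rescaling; let $V_t$ be the time-dependent generator of $\{\Phi_t\}$, which is Hamiltonian for $K_t:=\iota_{V_t}(r\alpha)$ because $L_{V_t}(r\alpha)=0$.

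The heart of the construction is to splice $\Phi_t$ into the radial flow. Fix a cutoff $\beta\colon[1,1+\epsilon)\to[0,1]$ which is $\equiv 0$ near both ends of the collar and $\equiv 1$ on a middle subinterval $[r_1,r_2]$, together with a reparametrisation $\sigma\colon[r_1,r_2]\to[0,1]$. Following Cieliebak--Eliashberg, one manufactures from this data a Liouville vector field $Z_t$ on the collar which equals $r\partial_r$ wherever $\beta=0$ — hence equals $Z$ near the two ends — and whose exit map from $\partial R^{in}\times\{1\}$ to $\partial R^{in}\times\{1+\epsilon\}$ is the trivial radial exit map post-composed with $\Phi_t$; concretely $Z_t$ is $r\partial_r$ plus a cutoff multiple of the radial transport of $V_{\sigma(r)}$, arranged so that the $1$-form $\lambda_t:=\iota_{Z_t}\,d(r\alpha)$ is a genuine Liouville form rather than merely the primitive of a degenerate $2$-form. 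Extending $\lambda_t$ by $\lambda$ over $R\setminus(\partial R^{in}\times[1,1+\epsilon))$ produces the desired family: away from the collar the Liouville flow is that of $\lambda$, and a trajectory crossing the collar picks up, in addition to the usual radial translation, exactly the contactomorphism $\phi_t$; the case $t=0$ gives $\lambda_0=\lambda$ since $\Phi_0=\mathrm{id}$.

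The main obstacle is the bookkeeping in the previous paragraph: one must choose $\beta$ and $\sigma$ so that $\iota_{Z_t}d(r\alpha)$ is an honest Liouville form (the naive cutoff introduces a $d\beta$-term that must be absorbed, or shown to be dominated by the $\alpha\wedge(d\alpha)^{n-1}$ contribution coming from $r\partial_r$), while simultaneously pinning the exit map to be precisely $\Phi_t$ and not some uncontrolled perturbation of it. A secondary point is that the collar must be wide enough in the $r$-direction to accommodate the compression factor $e^{-g_t}$, which is why the preliminary rescaling is permitted. Both points are exactly what the cited theorem of Cieliebak--Eliashberg supplies, and, as already noted, the parametric dependence on $t$ needs no new idea because $g_t$, $\beta$, $\sigma$, and hence $Z_t$ and $\lambda_t$, all vary smoothly with $t$.
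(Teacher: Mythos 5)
The paper does not prove this lemma at all --- it is quoted from Cieliebak--Eliashberg with no argument given --- so there is no in-paper proof to compare against; I can only assess your reconstruction. Your overall route is the standard one and is essentially right: put a collar $(\partial R^{in}\times[1,1+\epsilon), r\alpha)$ in normal form via the Liouville flow, lift $\phi_t$ to the $r\alpha$-preserving isotopy $\Phi_t(x,r)=(\phi_t(x),e^{-g_t(x)}r)$ of the symplectization, and splice this into the collar; the widening remark correctly handles the compression factor $e^{-g_t}$.

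The gap is in the splicing step itself, which is where all the content lies and which you explicitly hand back to the cited theorem --- circular if this is meant as a proof of the statement. Moreover the repair you gesture at is not the right mechanism: for $Z_t=r\partial_r+c(r)\widetilde V_{\sigma(r)}$ the form $\iota_{Z_t}d(r\alpha)$ is in general \emph{not} a primitive of a symplectic form, and no ``domination by the $\alpha\wedge(d\alpha)^{n-1}$ term'' fixes this, because being Liouville is the exact identity $d\iota_{Z_t}d(r\alpha)=d(r\alpha)$, not an inequality. The standard painless fixes are: (i) cut off the \emph{Hamiltonian} rather than the vector field, i.e.\ set $\lambda_t:=r\alpha + d\bigl(c(r)\,K_{t,\sigma(r)}\bigr)$ extended by $\lambda$; since a Liouville field plus a Hamiltonian field is again Liouville (equivalently, adding an exact term supported in the collar leaves $d\lambda_t=d\lambda$ and the flow outside the collar unchanged), the only remaining work is computing that the exit map is $\phi_t$ for suitable $c,\sigma$, and checking the modified field still crosses the collar; or, cleaner, (ii) define $\Psi_t(x,r):=\Phi_{\beta(r)t}(x,r)$ with a cutoff $\beta$ going from $0$ at the inner end to $1$ at the outer end, and set $\lambda_t:=\Psi_t^{*}(r\alpha)$ on the collar. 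Then $d\lambda_t=\Psi_t^{*}d(r\alpha)$ is automatically symplectic, $\lambda_t=r\alpha$ near both ends (identity near $r=1$, and $\Phi_t^{*}(r\alpha)=r\alpha$ near the outer end), and the Liouville field is $(\Psi_t^{-1})_{*}(r\partial_r)$, whose holonomy across the collar is visibly the radial map twisted by $\phi_t$; the only checks are that $\Psi_t$ is injective with image in the (widened) collar, which your preliminary rescaling already addresses. With either of these substitutions your argument closes up without appealing to the result being proved.
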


The ribbotopies we use will be of the following form.  Given some contact manifold $(V, \lambda)$ and Liouville hypersurface $R$, 
we will cut $R$ into $R^{in}$ and $R^{out} = R \setminus R^{in}$.   We push $R^{out}$ by an ambient contact isotopy which restricts
to a contact isotopy along $\partial R^{in}$.  That is, from the point of view of the 1-form on the family of hypersurfaces thusly created, 
the isotopy looks like that of the above lemma.   (Below, the $R^{out}$ will always be the part corresponding what is 
further towards the leaves of the tree.)

\subsection{Graph ribbotopies}

The notions of ribbon and ribbon equivalence are well studied in the context of Legendrian graphs in contact 3-manifolds. 
Many explicit diagrams of isotopies and ribbotopies
can be found e.g. in the papers \cite{baader-ishikawa, odonnol-pavalescu, lambertcole-odonnol}.  
In Figure \ref{graphribbotopies} we collect the graph ribbotopies.  They
allow a sanity check on the later alleged ribbotopies by drawing them in this dimension as a sequence of these moves. 
In fact, except for the move we require in Section \ref{atom}, all other ribbotopies we use can be obtained as stabilizations of
these moves. 

\begin{figure}
\begin{center}
\includegraphics[scale=0.8]{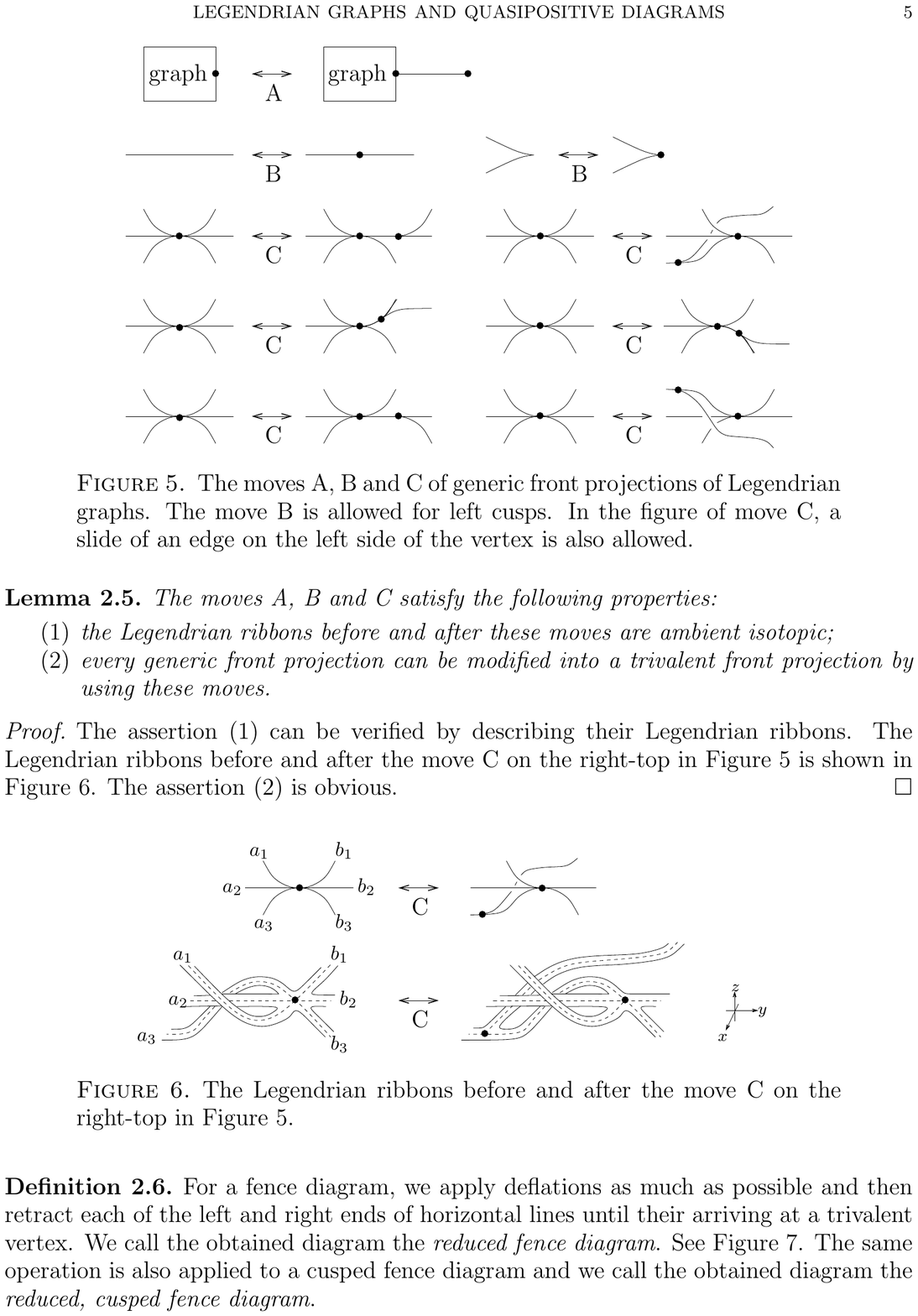}
\end{center}
\caption{\label{graphribbotopies} Graph ribbotopies  (from \cite{baader-ishikawa}).}
\end{figure}

\begin{figure}
\begin{center} 
\includegraphics[scale=0.8]{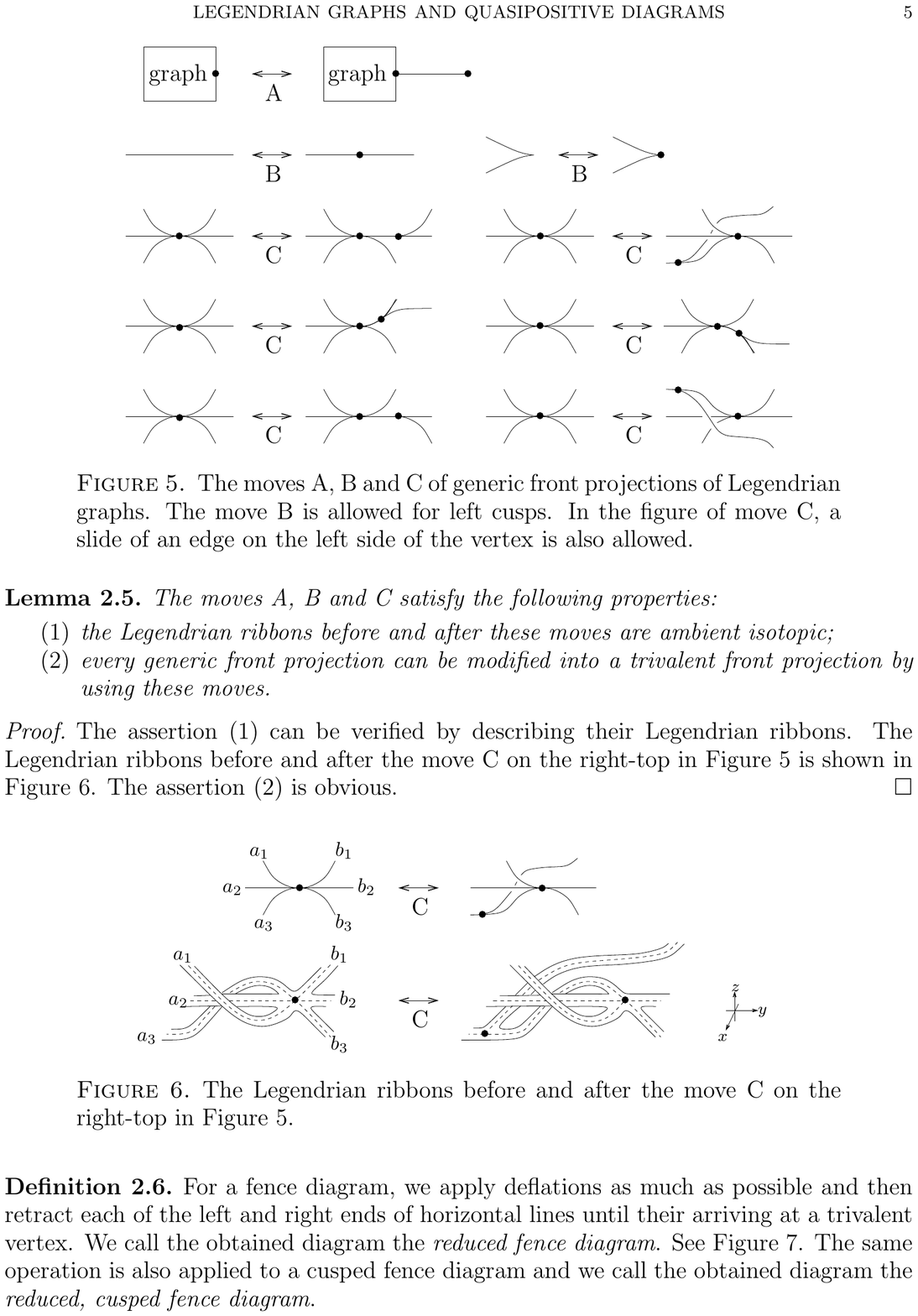}
\caption{\label{graphribbotopies} The `C' moves are a bit weird from the point of view of the front projection.  It is clearer when the surface is drawn. 
  (from \cite{baader-ishikawa}).}
\end{center}
\end{figure}

\begin{figure}
\begin{center}
\includegraphics{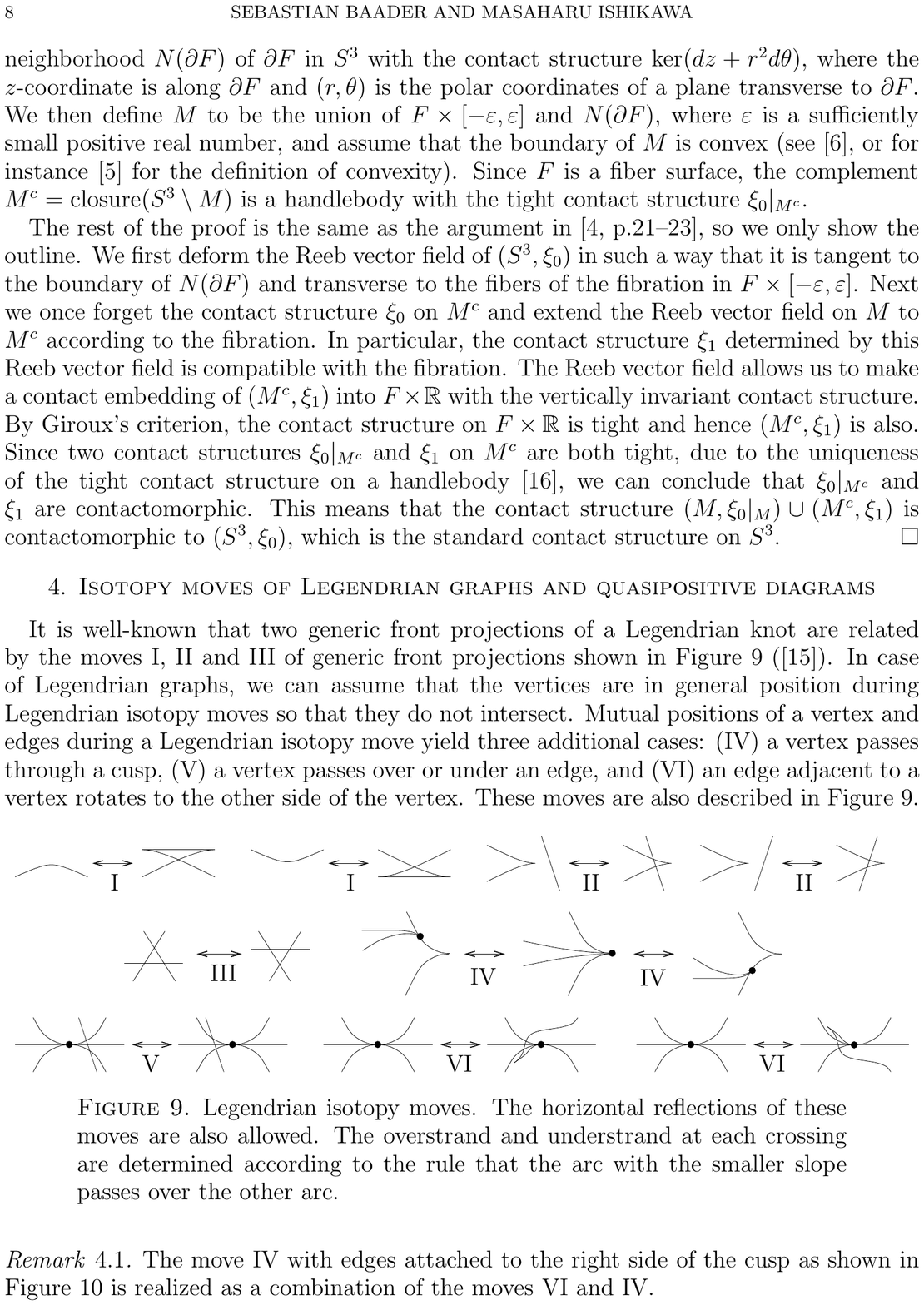} 
\end{center}
\caption{\label{reidemeister} The Reidemeister moves for graphs (from \cite{baader-ishikawa}).}
\end{figure}

It is useful to note that a Legendrian graph has, at each vertex, a canonical cyclic order of the edges, since all their tangents
must lie in the contact plane.  What is going on in the `C' moves is just that a given edge is sliding onto an edge either immediately
before or immediately after it in the cyclic order.  The apparent 
weirdness of the `C' moves has to do with the fact that the cyclic order is messed up by the front projection.  In the front projection, the 
cyclic order is:  negative-to-positive slopes to the right of the vertex, then positive-to-negative slopes to the left of the vertex. 

Also useful will be the Reidemeister moves for Legendrian graphs, i.e., ways the front projection can be altered
by a small contact isotopy.  These are again from \cite{baader-ishikawa}.  
We will frequently draw move VI; a good way to think of it is as half of a Reidemeister 1 move.

\section{Proof of Theorem \ref{arborlefschetz}}

We will show in Section \ref{plumbarms} that the plumbings have link given by the armadillo; in Section \ref{armmo}
that the armadillo is ribbotopic to the mothership, and in Section \ref{atom} 
that the arboreal link is ribbotopic to the coordinated mothership.  The coordinated mothership being obviously ribbotopic to the 
mothership, this will complete the proof of Theorem \ref{arborlefschetz}.

\subsection{The relative skeleton of $(\RR^{2n}, \Pi_{\vec{T}})$} \label{plumbarms}

In \cite{casals-murphy}, an algorithm is given for drawing front projections of 
Legendrians which live in the boundaries of Lefschetz fibrations whose fibre is a sphere plumbing.  We only need the zeroeth 
step of this algorithm: drawing the front projection of the skeleton of the plumbing itself.  In \cite{casals-murphy}, the Legendrians of interest
are in the contact manifold $\partial (\Pi_{\vec{T}} \times \CC)$.  This contact manifold is obtained from $S^{2n-1}$ by attaching
(subcritical) handles along a certain amount of $S^{n-2}$.  Thus they draw fronts in an $\RR^{n}$ with a certain amount of 
$S^{n-2}$ shaped wormholes (one for each sphere of the plumbing).  They give explicitly a picture of the skeleton of the plumbing
in this space.   See Figure \ref{casalsmurphy}. 

\begin{figure}
\begin{center}
\includegraphics{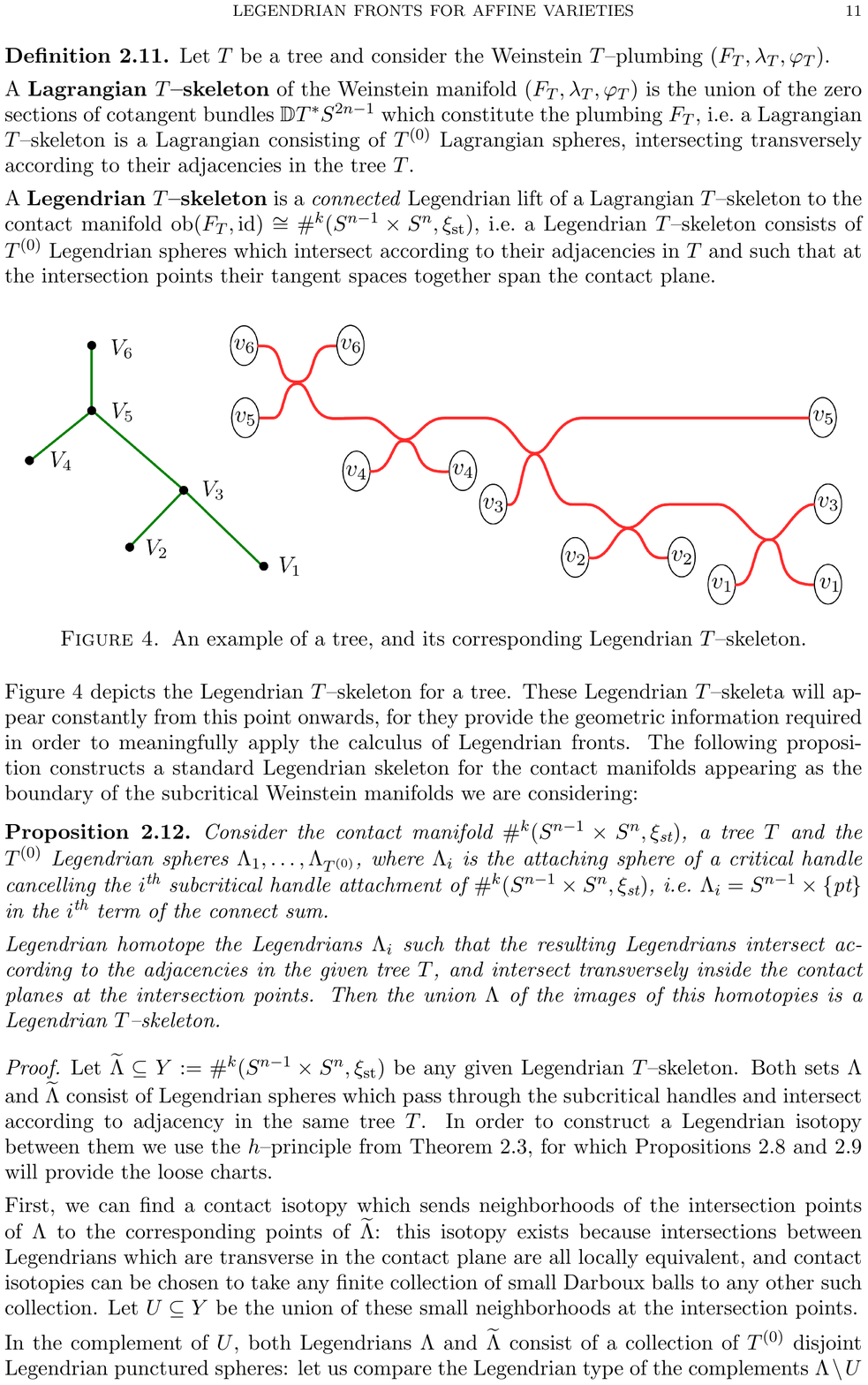}
\end{center}
\caption{\label{casalsmurphy}The front of a plumbing, as drawn in the front space for $\RR^{2n}$ after attaching several subcritical handles.  (From 
\cite{casals-murphy}.)}
\end{figure}

We are interested in understanding this skeleton in a different space: the contact boundary of the Weinstein manifold 
which results from cancelling these $S^{n-2}$'s with critical handles, which should be attached along the $S^{n-1}$'s which are
plumbed together to make the skeleton.  The front projection of the result is given by erasing
the wormholes, adding a small (say upwards) pushoff of each of these $D^{n-1}$'s which are visible in the projection, 
then connecting them to the one below to make a flying saucer.  The result is isotopic to the armadillo of Def. \ref{plumbfront}.   
See Figure \ref{cancelled}. We conclude:

\begin{figure}
\begin{center}
\includegraphics[scale = 0.5]{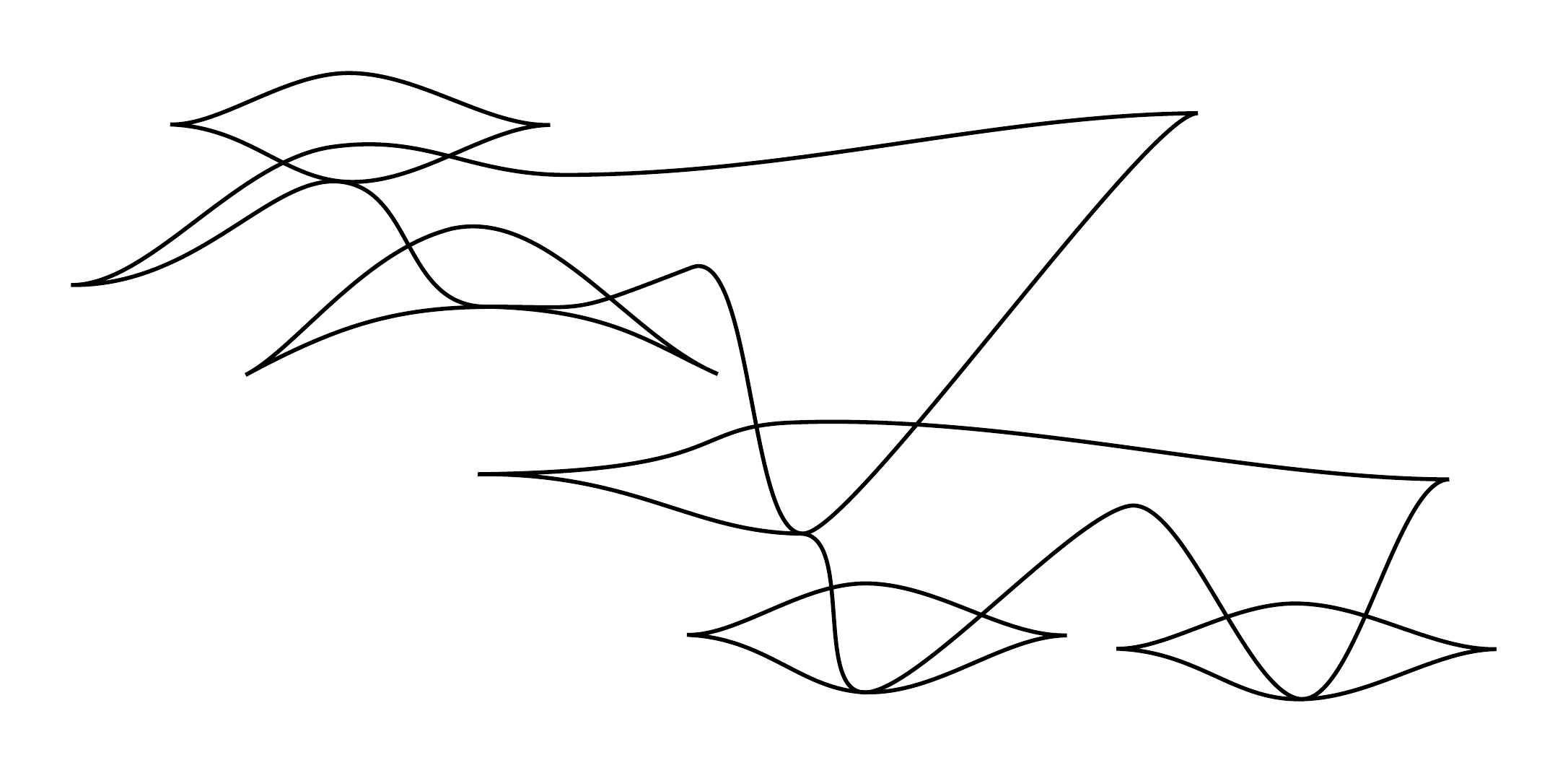}
\end{center}
\caption{\label{cancelled}The front of a plumbing, as drawn in the front space for $\RR^{2n}$.}  
\end{figure}

\begin{proposition}   
Let $\vec{T}$ be a rooted tree.  
Consider the Liouville pair $(\RR^{2n}, \Pi_{\vec{T}})$ arising from the Lefschetz fibration with fibre the plumbing
of sphere cotangent bundles $\Pi_{\vec{T}}$ and vanishing cycles the ordered zero sections.  Then $\skull(\RR^{2n}, \Pi_{\vec{T}})$ 
is carried by an ambient contact isotopy to the Legendrian with front as in Def. \ref{plumbfront}. 
\end{proposition}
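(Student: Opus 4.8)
The plan is to use only the initial step of the front-drawing algorithm of \cite{casals-murphy} and then perform a single Weinstein handle cancellation whose effect on the front can be read off directly. First I would recall the relevant input from \cite{casals-murphy}: they work in the contact manifold $\partial_\infty(\Pi_{\vec T}\times\CC)$, which is $S^{2n-1}$ surgered by one subcritical handle for each sphere of the plumbing, so that its fronts live in $\RR^n$ decorated with the corresponding $S^{n-2}$-shaped wormholes; there they present the skeleton $\skull(\Pi_{\vec T})$ as the singular Legendrian of Figure \ref{casalsmurphy}, a family of $D^{n-1}$ sheets --- one per vertex of $\vec T$, each threading its own wormhole --- glued together according to the tree and its plumbing. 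Since $\Pi_{\vec T}$ sits as a Liouville hypersurface in $\partial_\infty\RR^{2n}$, and the skeleton $\skull(\RR^{2n},\Pi_{\vec T})$ is conical with ideal Legendrian boundary exactly this $\skull(\Pi_{\vec T}) \subset \partial_\infty\RR^{2n}$, the proposition amounts to drawing that same singular Legendrian as a front in the front space of $\partial_\infty\RR^{2n} = S^{2n-1}$.

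Next I would record how $\partial_\infty\RR^{2n}$ is obtained from $\partial_\infty(\Pi_{\vec T}\times\CC)$. The Lefschetz fibration total space is built from $\Pi_{\vec T}\times\CC$ by attaching, for each vertex of $\vec T$, a critical Weinstein $n$-handle along a Legendrian lift of the corresponding zero-section sphere; this is precisely the statement that the vanishing cycles are the ordered zero sections. Because the total space of this fibration is $\RR^{2n}$, each of these critical handles must cancel the subcritical handle producing that sphere's wormhole. Hence $\partial_\infty\RR^{2n}$ is the result of performing these $|\vec T|$ handle cancellations on $\partial_\infty(\Pi_{\vec T}\times\CC)$, and the singular Legendrian $\skull(\Pi_{\vec T})$ is carried along by them.

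Then I would translate each cancellation into its effect on the front. Erasing a wormhole leaves the $D^{n-1}$ sheet that had threaded it with a free boundary $S^{n-2}$; the cancellation caps this off by a small upward pushoff sheet, so that the two sheets glue along their common $S^{n-2}$ into a standard flying-saucer front, i.e. the front of a Legendrian $(n-1)$-sphere whose lower hemisphere is the original sheet. Performing this erase-and-cap move at every wormhole, and keeping track of the tree-indexed nesting already visible in Figure \ref{casalsmurphy} --- the shrub growing from the root yielding the configuration of Figure \ref{armadillo}, and the subtrees growing from the distance-one vertices being handled recursively inside the corresponding small saucers --- produces precisely the front of Definition \ref{plumbfront}, as in Figure \ref{cancelled}. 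Every move here is visible in the front projection, so these identifications are realized by an ambient contact isotopy of $S^{2n-1}$, which is the assertion.

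The step I expect to be the main obstacle is this last translation: checking that the Weinstein handle cancellations, rather than a merely smooth or homotopy-theoretic simplification, genuinely induce the local surgery ``erase the wormhole, add a pushoff, close up into a saucer'' on the front, and that iterating it over $\vec T$ reproduces the armadillo on the nose --- including matching the total order on the vanishing cycles with the left-to-right order of the small saucers in Figure \ref{armadillo}. That the output is a well-defined singular Legendrian, independent of the auxiliary choices entering its front, need not be re-examined here: it is guaranteed by the ribbon ``uniqueness up to matryoshka'' of the previous section.
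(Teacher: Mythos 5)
Your proposal follows essentially the same route as the paper: take the Casals--Murphy front for $\skull(\Pi_{\vec T})$ in the wormhole-decorated front space of $\partial_\infty(\Pi_{\vec T}\times\CC)$, cancel each subcritical handle against the critical handle attached along the corresponding zero-section lift, and read off the effect on the front as ``erase the wormhole, add an upward pushoff, close up into a saucer,'' recovering the armadillo of Def.~\ref{plumbfront}. The only cosmetic difference is that you infer the cancellations from the total space being $\RR^{2n}$ rather than invoking the standard Weinstein cancellation directly, and your appeal to matryoshka uniqueness at the end is unnecessary; neither affects the argument.
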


\subsection{Armadillos to motherships} \label{armmo}

The basic move to turn an armadillo into a mothership is

\begin{center}
\includegraphics[scale=0.4]{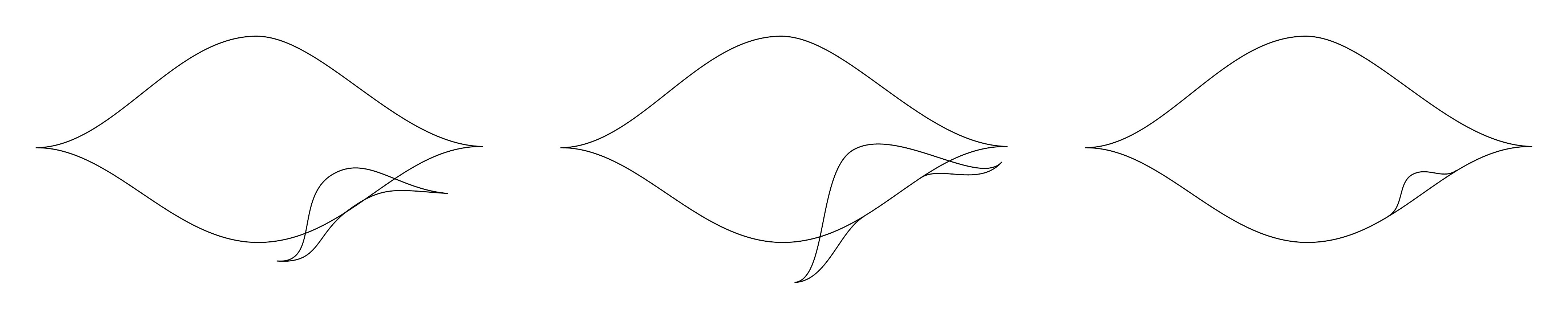}
\end{center}

The first step is a nontrivial ribbotopy, the second is just an isotopy (in this dimension, the first step is the type `C' ribbotopy in the list above,
and the second is the type `VI' Reidemeister move. Evidently this works for shrubs, in any dimension. 

\begin{example}
For $\bullet \leftarrow \bullet \to \bullet$: 
\begin{center}
\includegraphics[scale=0.25]{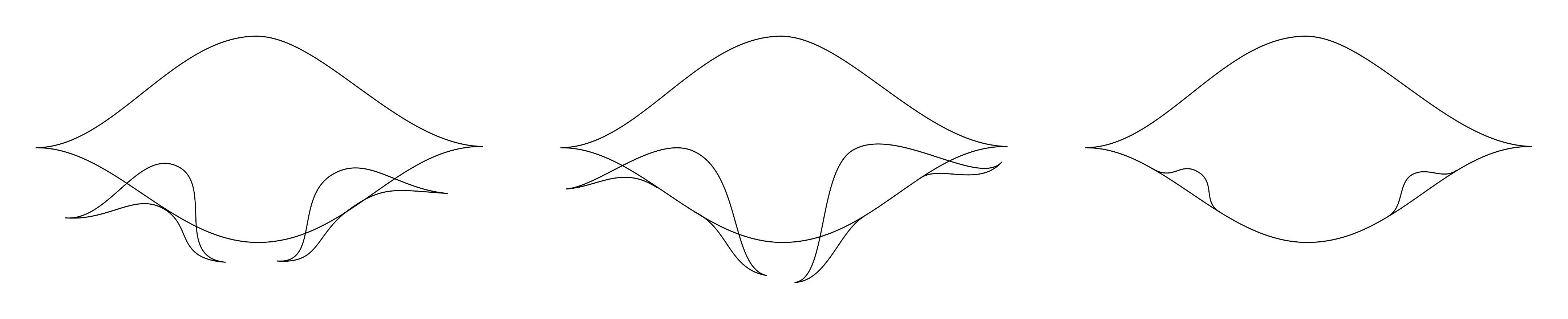}
\end{center}
\end{example}

To set up the inductive procedure one runs into the following difficulty.  Try to apply the above move
starting from the mothership for $\bullet \to \bullet \to \bullet$.  One arrives here: 

\begin{center}
\includegraphics[scale=1]{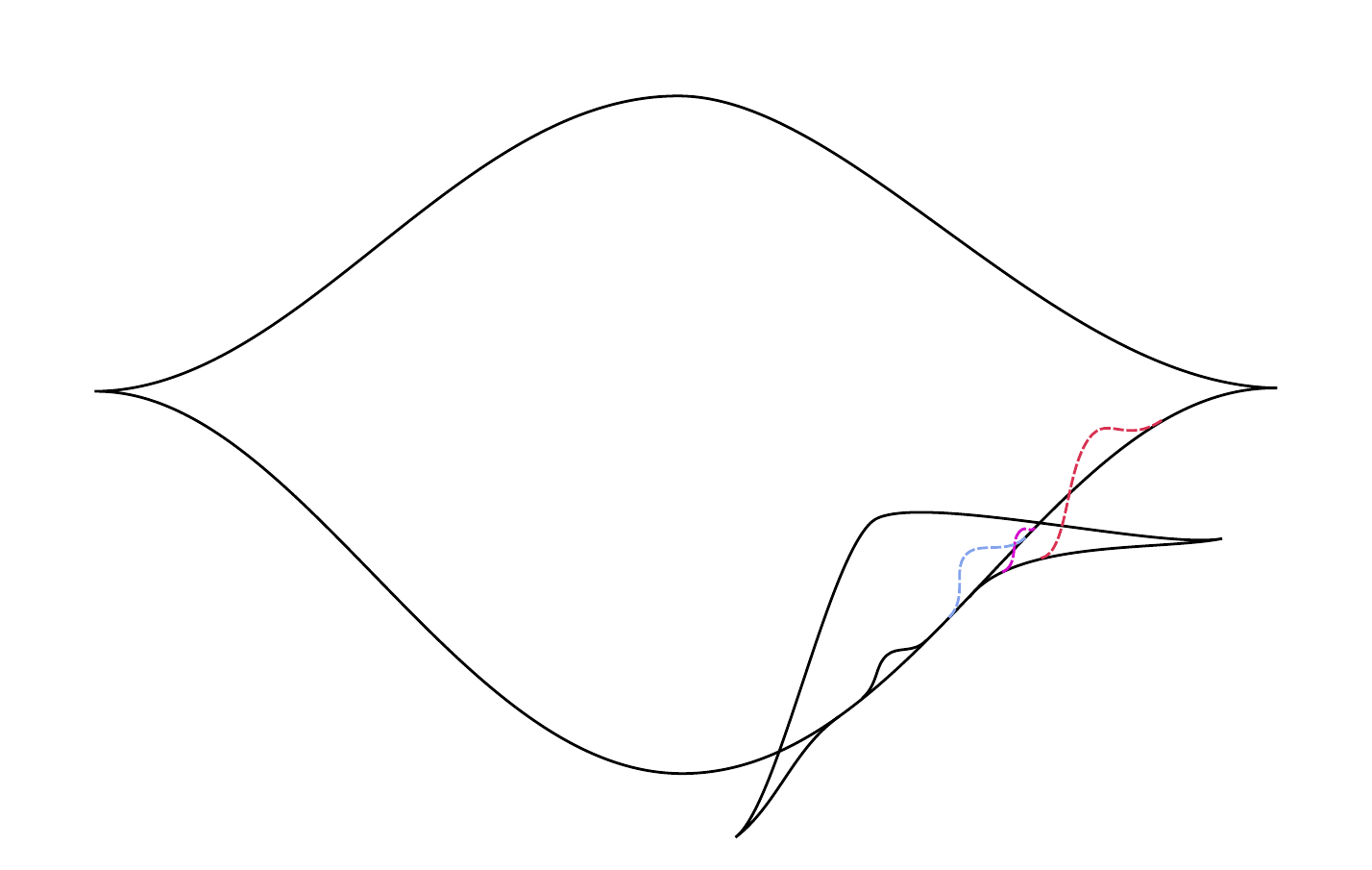}
\end{center}

That is, the smallest unknot is preventing the middle sized unknot from contracting the edge it shares with the largest unknot.  
One can try various things, e.g. sliding the small unknot out of the way as indicated in the above figure.  But then one is faced with the problem of getting
it back where it's supposed to be, namely near the bottom cusps of the middle-sized unknot. 

One thing which does work is the following: 

\begin{center}
\includegraphics[scale=0.25]{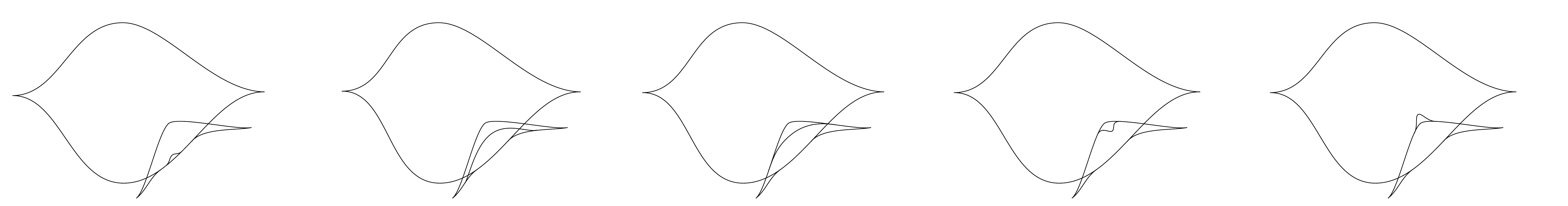}
\end{center}

In the first four pictures, the smaller unknot is travelling along the medium unknot; the junction where they meet is moving by the cone on the Reeb flow
nearby.  The passage from the first picture to the second is a ribbotopy, and from the second to the fourth, just isotopy.  The fourth to the fifth is a 
ribbotopy: the bottom piece of the smallest unknot becomes part of the medium unknot, and the top piece of the smallest unknot (previously
also a piece of the medium unknot) moves up. 

Note this works in any dimension, and with any number of smallest nested unknots.   Indeed, consider where
they attach to the medium unknot. The dynamics of these spheres is that of the fronts of waves emitted simultaneously from several points on the sphere.
The legendrians of these waves never meet each other (after all they are all flowing by Reeb), and eventually the reconverge at the antipodal points. 

In the case that there are further levels of nesting, this procedure should be applied to the lowest (further from the root) nested level first.  Now the smaller levels have gotten out of the way, it is possible to ribbotope the unknots corresponding to the nodes one away in the tree to look like the plumbing model.  

Note there is an ambient isotopy carrying the higher nested unknots living at the top back down to the bottom.  
Here is a picture in front projections, communicated to me  by Peter Lambert-Cole.  The dashed lines indicate 
that a VI move is about to (or has just) been performed, and they are the ``horizontal'' lines with respect to this move. 

\begin{center}
\includegraphics[scale=0.25]{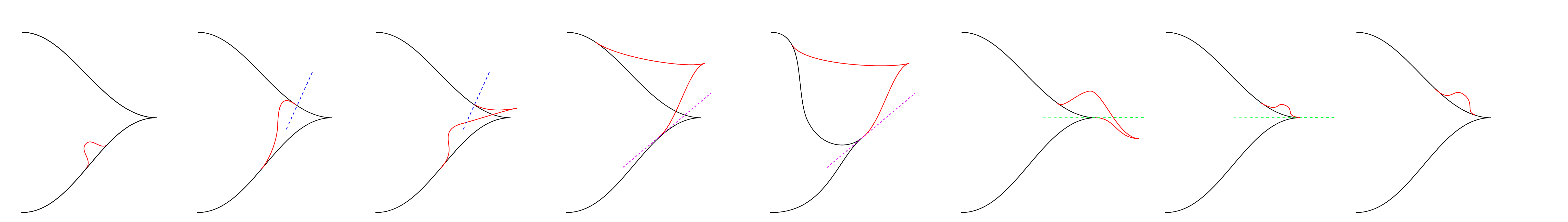}
\end{center}

\subsection{Arboreal links to motherships} \label{atom}

The idea is as follows: each hypersurface we have introduced in making the front diagram for the arboreal link 
is mostly a coordinate hypersurface.  We slide all these hypersurfaces simultaneously along their normal direction
which points downward (recall that the vertical axis is the sum of the coordinates).  A hypersurface corresponding to
a vertex on the $k$'th level of the tree should slide at speed $c^{-k}$ for some constant $c > 1$.  

To define it more precisely, we proceed as usual to first define it for the tree $\bullet \to \bullet$, then extend to shrubs, 
and finally to extend to all trees by induction.  The picture for $\bullet \to \bullet$ is: 

\begin{center}
\includegraphics[scale=0.25]{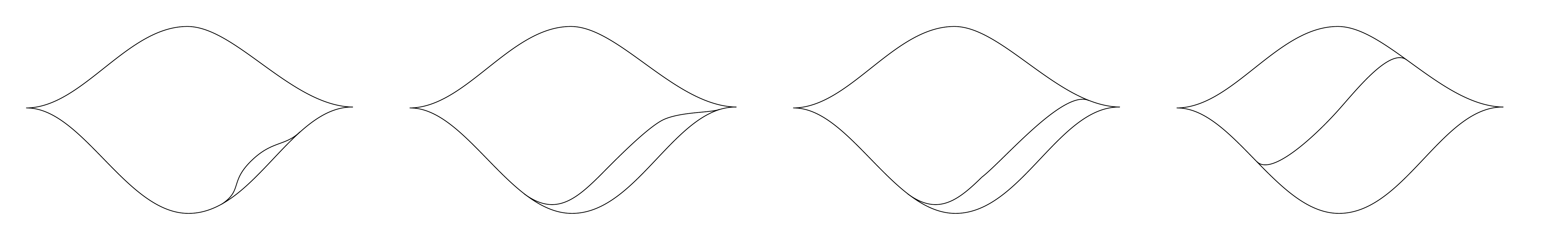}
\end{center}

This is just an isotopy.  (In terms of the Reidemeister moves above, we used move IV.) 

\vspace{4mm}

As usual, for shrubs we suspend the picture and take a union over appropriate permutations of coordinates. 
Away from the big unknot, this is still an isotopy: the hypersurfaces meet in the front projection, but where they meet
they are parallel to distinct coordinate hypersurfaces, hence have different lifts.  Near the big unknot, it is not an isotopy -- 
the loci where the hypersurfaces meet the big unknot will meet during the movie -- but the behavior along the big unknot 
is a cone of the nearby behavior, hence is a ribbotopy.

\begin{example}
Scenes from the movie for $\bullet \leftarrow \bullet \to \bullet$: 

\begin{center}
\includegraphics[scale=0.25]{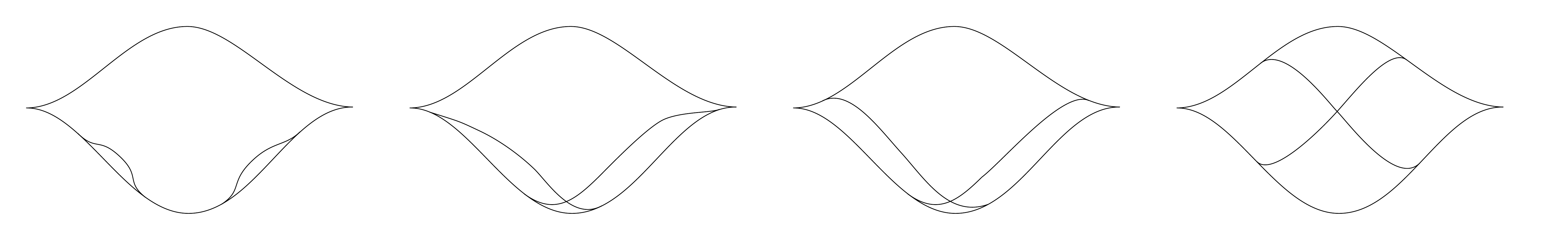}
\end{center}

All the action happens between the first scene and the second.  This is a ribbotopy of type `C' in the list above. 
\end{example}

We turn to the general case.  First, by induction, for the stuff within the pods in the mothership, ribbotope to the appropriate
arboreal singularity.  Then, with the upper boundaries of the pods, apply the above ribbotopy.  One must check that whatever 
is going on in the interior of the pods does not interfere with the moving hypersurfaces.  This is true because the stuff inside
the pods is already in the arboreal configuration --- i.e., the stuff in each pod is close to tangent to coordinate hypersurfaces,
and the coordinates involved for different pods are disjoint.  So away from the big unknot, the moving fronts of these 
hypersurfaces lift to disjoint legendrians.  This extends to the big unknot as a ribbotopy. 

\begin{example}
It's not so meaningful to draw the $\bullet \to \bullet \to \bullet$ case, since nothing could even conceivably interfere with
anything else, but here it is anyway.  

\begin{center}
\includegraphics[scale=0.25]{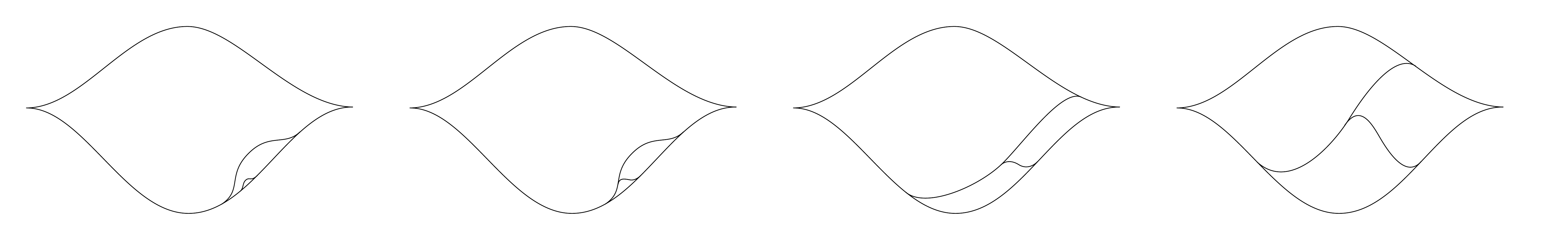}
\end{center}

Again, the movie is just an isotopy except between the  first scene and the second. 
\end{example}

\bibliographystyle{amsplain}
\bibliography{all}

\newcommand{\chsort}[1]{}
\providecommand{\bysame}{\leavevmode\hbox to3em{\hrulefill}\thinspace}
\providecommand{\MR}{\relax\ifhmode\unskip\space\fi MR }
\providecommand{\MRhref}[2]{%
  \href{http://www.ams.org/mathscinet-getitem?mr=#1}{#2}
}
\providecommand{\href}[2]{#2}
\begin{thebibliography}{10}

\bibitem{avdek}
Russell Avdek, \emph{Liouville hypersurfaces and connect sum cobordisms}, arXiv
  preprint \textbf{arXiv:1204.3145} (2012).

\bibitem{baader-ishikawa}
Sebastian Baader and Masaharu Ishikawa, \emph{Legendrian graphs and
  quasipositive diagrams}, arXiv preprint \textbf{math/0609592} (2006).

\bibitem{casals-murphy}
Roger Casals and Emmy Murphy, \emph{Legendrian fronts for affine varieties},
  arXiv preprint \textbf{arXiv:1610.06977} (2016).

\bibitem{cieliebakeliashberg}
Kai Cieliebak and Yakov Eliashberg, \emph{From {S}tein to {W}einstein and
  back}, American Mathematical Society Colloquium Publications, vol.~59,
  American Mathematical Society, Providence, RI, 2012, Symplectic geometry of
  affine complex manifolds. \MR{3012475}

\bibitem{eliashbergweinsteinrevisited}
Yakov Eliashberg, \emph{Weinstein manifolds revisited}, Arxiv Preprint
  \textbf{arXiv:1707.03442v2} (2017), 1--32.

\bibitem{gpssectorsoc}
Sheel Ganatra, John Pardon, and Vivek Shende, \emph{Covariantly functorial
  wrapped {F}loer theory on {L}iouville sectors}, ArXiv Preprint
  \textbf{arXiv:1706.03152v2} (\chsort{a}2018), 1--103.

\bibitem{gpstheater}
\bysame, \emph{Microlocal {M}orse theory of wrapped {F}ukaya categories}, ArXiv
  Preprint \textbf{arXiv:1809.08807} (\chsort{b}2018), 1--47.

\bibitem{gpsstructural}
\bysame, \emph{Structural results in wrapped {F}loer theory}, ArXiv Preprint
  \textbf{arXiv:1809.03427} (\chsort{b}2018), 1--79.

\bibitem{kashiwara-schapira}
Masaki Kashiwara and Pierre Schapira, \emph{Sheaves on manifolds}, Grundlehren
  der Mathematischen Wissenschaften [Fundamental Principles of Mathematical
  Sciences], vol. 292, Springer-Verlag, Berlin, 1990, With a chapter in French
  by Christian Houzel. \MR{1074006}

\bibitem{lambertcole-odonnol}
Peter Lambert-Cole and Danielle O'Donnol, \emph{Planar legendrian graphs},
  arXiv preprint \textbf{arXiv:1604.00836} (2016).

\bibitem{nadler-noncharacteristic}
David Nadler, \emph{Non-characteristic expansions of {L}egendrian
  singularities}, arXiv Preprint \textbf{arXiv:1507.01513} (2016), 1--47.

\bibitem{nadler-wrapped}
\bysame, \emph{Wrapped microlocal sheaves on pairs of pants}, Arxiv Preprint
  \textbf{arXiv:1604.00114} (2016), 1--50.

\bibitem{nadlerarboreal}
David Nadler, \emph{Arboreal singularities}, Geom. Topol. \textbf{21} (2017),
  no.~2, 1231--1274. \MR{3626601}

\bibitem{nadler-shende}
David Nadler and Vivek Shende, \emph{Sheaf quantization of the exact symplectic
  category}, In preparation.

\bibitem{odonnol-pavalescu}
Danielle O?Donnol and Elena Pavelescu, \emph{On legendrian graphs}, Algebraic
  \& Geometric Topology \textbf{12} (2012), no.~3, 1273--1299.

\bibitem{seidelbook}
Paul Seidel, \emph{Fukaya categories and {P}icard-{L}efschetz theory}, Zurich
  Lectures in Advanced Mathematics, European Mathematical Society (EMS),
  Z\"urich, 2008. \MR{2441780 (2009f:53143)}

\bibitem{shende-microlocal}
Vivek Shende, \emph{Microlocal category for {W}einstein manifolds via
  h-principle}, ArXiv Preprint \textbf{arXiv:1707.07663} (2017), 1--6.

\bibitem{starkston-arboreal}
Laura Starkston, \emph{Arboreal singularities in weinstein skeleta}, arXiv
  preprint \textbf{arXiv:1707.03446} (2017).

\bibitem{sylvanthesis}
Zachary Sylvan, \emph{On partially wrapped {F}ukaya categories}, Arxiv Preprint
  \textbf{arXiv:1604.02540} (2016), 1--97.

\end{thebibliography}

\end{document}